\newenvironment{proof}[1][Proof]{\textbf{#1.} }{\ \rule{0.5em}{0.5em}}
\newtheorem{theorem}{Theorem}[section]
\begin{document}

\title{Numerical studies  of the Lagrangian approach for reconstruction of the conductivity in a waveguide}

\author{Larisa Beilina  \thanks{Department of Mathematical Sciences,
    Chalmers  University of Technology and Gothenburg University,
    SE-412 96   Gothenburg Sweden, e-mail \texttt{larisa.beilina@chalmers.se}} \and K. Niinim\"aki \thanks{
IR4M UMR8081, CNRS, University of Paris-Sud, University of
Paris-Saclay, SHFJ, 4 place du G\'{e}n\'{e}ral Leclerc 91401 Orsay
France, e-mail \texttt{kati.niinimaki@u-psud.fr}}}

\date{}

\maketitle 

\begin{abstract}
We consider an inverse problem of reconstructing the
  conductivity function in a hyperbolic equation using single
  space-time domain noisy observations of the solution on the
  backscattering boundary of the computational domain. We formulate
  our inverse problem as an optimization problem and use Lagrangian
  approach to minimize the corresponding Tikhonov functional.  We
  present a theorem of a local strong convexity of our functional and
  derive error estimates between computed and regularized as well as
  exact solutions of this functional, correspondingly.
In numerical simulations we apply domain decomposition finite
element-finite difference method for minimization of the
Lagrangian. Our computational study shows efficiency of the proposed
method in the reconstruction of the conductivity function in three
dimensions.

\end{abstract}
\section{Introduction}
\label{sec:intro}
In this work, we consider the coefficient inverse problem (CIP) 
 of reconstructing the conductivity function in a hyperbolic equation
using single observation of the solution of this equation in space
and time on the backscattered boundary of the computational domain. In
our simulations, backscattered boundary measurements are generated by a
single direction of propagation of a plane wave.  We solve our CIP via
minimization of the corresponding Tikhonov functional and use
the Lagrangian approach to minimize it.  Applying results of \cite{BOOK,
  BKK}, we have formulated a theorem of a local strong convexity of this
functional in our case and show that the gradient method for
minimizing this functional will converge.
  We have also presented estimates of the norms between
computed and regularized solution of the Tikhonov functional via the
$L_2$ norm of the Fr\'{e}chet derivative of this functional and via
the corresponding Lagrangian.  In the minimization procedure of the
Lagrangian, we applied conjugate gradient method and the domain
decomposition finite element/finite difference method of
\cite{BAbsorb}. The method of \cite{BAbsorb} is convenient for our
simulations since it is efficiently implemented in the software package
WavES \cite{waves} in C++ using PETSc \cite{petsc} and message
passing interface (MPI).

We tested our iterative algorithm by reconstructing a
conductivity function that represents some small scatterers as well
as smooth function inside the domain of interest.
 In all of our numerical simulations of this work we
induced one non-zero initial condition in the hyperbolic equation
accordingly to the theory of the recent work \cite{CristofolLiSoc}. In
\cite{CristofolLiSoc} it was shown 
 that one non-zero initial condition associated with the observation of 
the solution of the hyperbolic equation involve uniqueness and stability results in
reconstruction of the conductivity function for a cylindrical domains.
Our three-dimensional numerical simulations show that we can accurately reconstruct
large contrast of the conductivity function  as well as its location. In our future
work, similar to \cite{BJ, BTKB}, we are planning to use
an adaptive finite element method in order to improve reconstruction
of the shapes obtained in this work.

Another method for reconstruction of conductivity function - a
layer-stripping algorithm with respect to pseudo-frequency - 
was presented in \cite{Chow}.
 In addition the mathematical model governed by the hyperbolic
 equation studied in this work can also be considered as a special
 case of a time-dependent transverse magnetic polarized wave
 scattering problem or as a simplified acoustic wave model for fluids
 with variable density and a constant bulk modulus.  In recent
 years, some rapid identification techniques have been developed for
 solving the elastodynamic inverse problem, for instance, crack/fault
 identification techniques are developed for cracks having free
 boundary condition using a reciprocity gap function \cite{Bui2005,
   Bui2004}, and linear sampling techniques are designed to locate
 inclusions in the isotropic elastic medium \cite{Bellis, Fata}. 
 To compare performance of the
 algorithm of this paper with different algorithms of \cite{Bellis, Bui2005,
   Bui2004, Chow, Fata} can be the subject of a future work.

The paper is organized as follows. In section \ref{sec:model} we
formulate the forward and inverse problems.  In section
\ref{sec:tikhonov} we present the Tikhonov functional to be minimized
and formulate the theorem of a local strong convexity of this
functional.  Section 4 is devoted to a Lagrangian approach to solve
the inverse problem.  In section \ref{sec:fem} we present finite
element method for the solution of our optimization problem and
formulate conjugate gradient algorithm used in computations. Finally,
in section \ref{sec:Numer-Simul} we present results of reconstructing
the conductivity function in three dimensions.

\section{Statement of the forward and inverse problems}

\label{sec:model}

Let $\Omega \subset \mathbb{R}^{3}$ be a convex bounded domain with
the boundary $\partial \Omega \in C^{3}$, $x=(x_1, x_2, x_3) \in
\mathbb{R}^{3}$ and $C^{k+\alpha }$ is H\"older space, $k\geq 0$ is an
integer and $\alpha \in \left( 0,\,1\right).$ We use the notation $\Omega_T
:= \Omega \times (0,T), \partial \Omega_T := \partial \Omega \times
(0,T), T > 0$.  Next, in our theoretical and numerical investigations
we use domain decomposition of the domain $\Omega$ into two
subregions, $\Omega_{\rm IN}$ and $\Omega_{\rm OUT}$ such that $\Omega
= \Omega_{\rm IN} \cup \Omega_{\rm OUT}$,  $\partial \Omega_{\rm IN} \subset
\partial \Omega_{\rm OUT}$, see figure \ref{fig:fig1}.  The
communication between two domains is done through  two layers of
structured nodes as described in \cite{BAbsorb}.
 The boundary $\partial \Omega$ of the domain $\Omega$ is  such that $\partial \Omega
=\partial _{1} \Omega \cup \partial _{2} \Omega \cup \partial _{3} \Omega$  where
$\partial _{1} \Omega$ and $\partial _{2} \Omega$ are, respectively, front and
back sides of the domain $\Omega$. The boundary $\partial _{3} \Omega$ is the union
of the left, right, top and bottom sides of the domain $\Omega$.

We denote by
 $S_T := \partial_1 \Omega \times (0,T)$ the space-time boundary
where we will have time-dependent observations of the backscattered field.
We use the notation $S_{1,1} := \partial_1
 \Omega \times (0,t_1]$, $S_{1,2} := \partial_1 \Omega \times
 (t_1,T)$,  $S_2 := \partial_2 \Omega \times (0, T)$,  $S_3 :=
 \partial_3 \Omega \times (0, T)$.

Our model problem is   as follows

\begin{equation}\label{model1}
\begin{split}
\frac{\partial^2 u}{\partial t^2}  -  \nabla \cdot ( c \nabla  u)   &= 0,~ \mbox{in}~~ \Omega_T, \\
  u(x,0) = f_0(x), ~~~ \partial_t u(x,0) &= f_1(x)~ \mbox{in}~~ \Omega,     \\
\partial _{n} u& = p\left( t\right) ~\mbox{on}~ S_{1,1},
\\
\partial _{n}  u& =-\partial _{t} u ~\mbox{on}~ S_{1,2},
\\
\partial _{n} u& =-\partial _{t} u~\mbox{on}~ S_2, \\
\partial _{n} u& =0~\mbox{on}~ S_3,\\
\end{split}
\end{equation}
which satisfies stability and uniqueness results of  \cite{CristofolLiSoc}.

Here, $p\left( t\right)\not\equiv 0$ is the incident plane wave
generated at the plane $\left\{x_3=x_{0}\right\}$ and propagating along
the $x_3$-axis. We assume that
\begin{equation}
f_{0}\in H^{1}(\Omega), f_{1}\in L_{2}(\Omega). \label{f1} 
\end{equation}

We assume that in $\Omega_{\rm OUT}$ the function $c(x)$ is known and
is defined as a constant coefficient $c = 1$.  For numerical solution of
the problem (\ref{model1}) in $\Omega_{\rm OUT}$ we can use either the
finite difference or the finite element method. Further in our
theoretical considerations we will use the finite element method in
both $\Omega_{\rm OUT}$ and  $\Omega_{\rm IN}$,  with known function
$c = 1$ in $\Omega_{\rm OUT}$ and in the overlapping layer of the structured
nodes between $\Omega_{\rm OUT}$ and $\Omega_{\rm IN}$. This layer is
constructed in a similar  way as in \cite{BAbsorb}. We note that in the numerical
simulations of section \ref{sec:Numer-Simul} we use the domain
decomposition method of \cite{BAbsorb} since this method is
efficiently implemented in the software package WavES \cite{waves} and
is convenient for our purposes.  We also note that both finite element
and finite difference techniques provide the same explicit schemes in
$\Omega_{\rm OUT}$ in the case of structured mesh in $\Omega_{\rm
  OUT}$, see \cite{Brenner} for details.

We make the following assumptions on the 
coefficient $c \left( x\right)$ in the problem (\ref{model1}):
\begin{equation} \label{2.3}
\begin{split}
Y = \{ c \left( x\right) &\in \left[ 1,d \right],~~ d = const.>1,~ c(x) =1
\text{ for }x\in  \Omega\diagdown \Omega_{\rm IN}, \\
c \left( x\right) &\in C^{2}\left( \bar{\Omega} \right) \}. 
\end{split}
\end{equation}

We consider the following

\textbf{Inverse Problem  (IP)} \emph{Suppose that the coefficient
}$c \left( x\right)$  \emph{\ of (\ref{model1})  satisfies conditions (\ref{2.3}).
 }\emph{\ Assume that the
  function  }$ c\left( x\right) $\emph{\ is unknown in the
  domain }$\Omega \diagdown  \Omega_{\rm OUT}$\emph{. Determine the function }$ c\left(
x\right) $\emph{\ for }$x\in \Omega \diagdown  \Omega_{\rm OUT},$ \emph{\ assuming that the
  following space and time-dependent function }$\tilde u\left( x,t\right) $\emph{\ is known}
\begin{equation}
  u\left( x,t\right) = \tilde u \left( x,t\right) ,\forall \left( x,t\right) \in
  S_T.  \label{2.5}
\end{equation}

From the assumptions (\ref{2.3}) it follows that we should know a priori upper and lower bounds 
of the function  $c\left(
x\right)$.  This corresponds to the theory of inverse problems about the
availability of a priori information for an ill-posed problem
\cite{Engl, tikhonov}. 
In applications, the assumption $c\left(x\right) =1$ for $x\in
\Omega_{\rm OUT}$ means that the function $c\left( x\right)$ corresponds
to the homogeneous domain in $\Omega \diagdown \Omega_{\rm OUT}$.

\begin{figure}[tbp]
 \begin{center}
 \begin{tabular}{cc}
{\includegraphics[scale=0.2, clip = true, trim = 8.0cm 8.0cm 8.0cm 8.0cm]{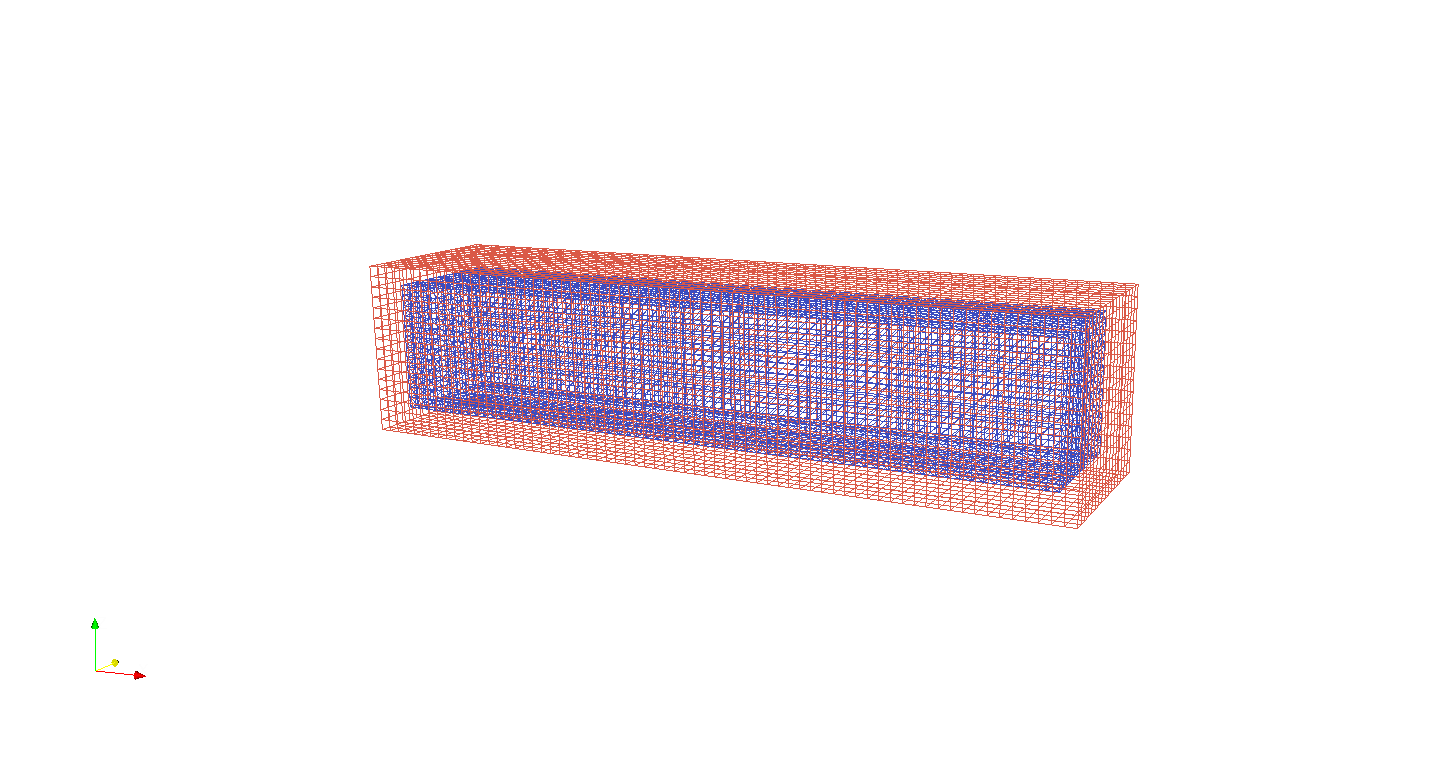}} &
 {\includegraphics[scale=0.2, clip = true, trim = 8.0cm 8.0cm 8.0cm 8.0cm]{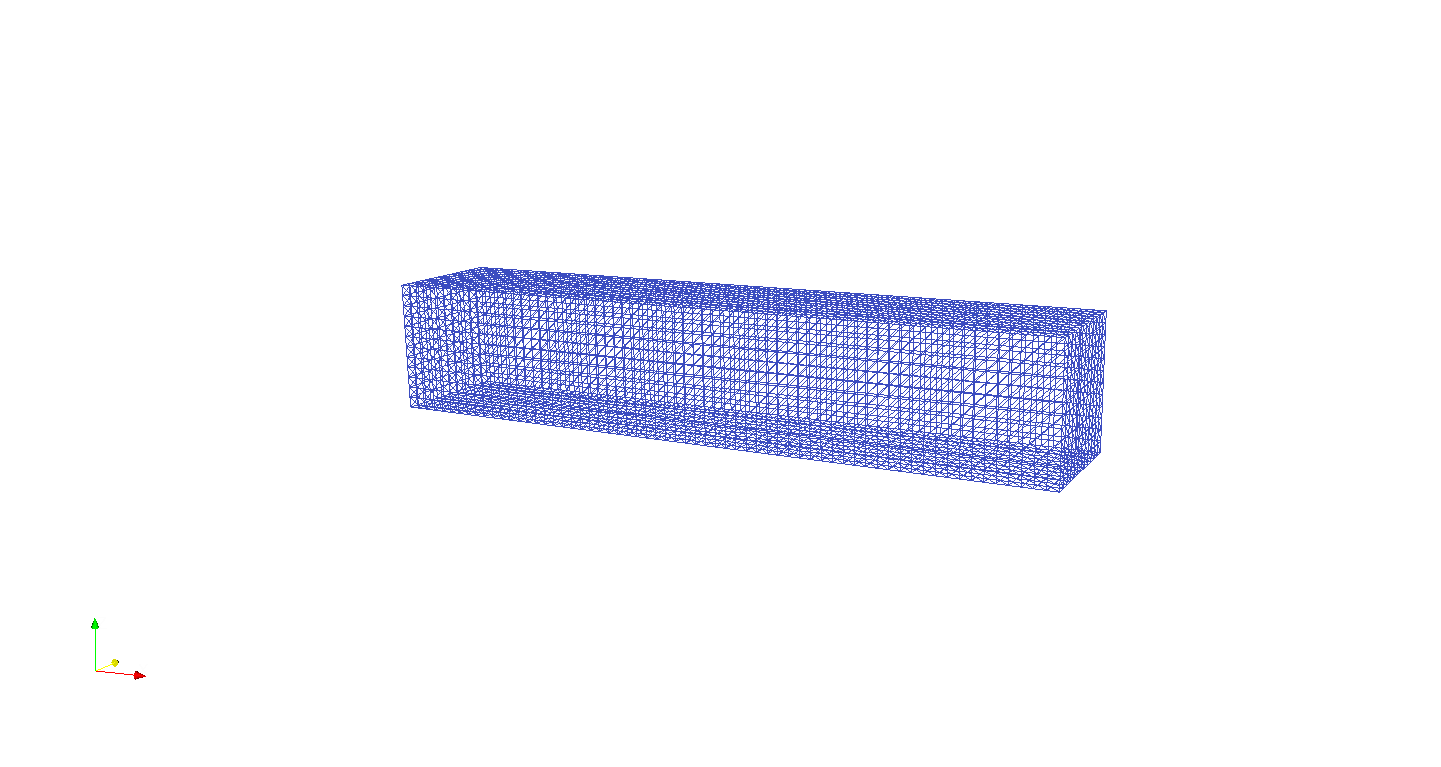}}
 \\
 a) $\Omega = \Omega_{IN} \cup \Omega_{OUT}$ &  b)  $\Omega_{IN}$  \\
 {\includegraphics[scale=0.2, clip = true, trim = 8.0cm 8.0cm 8.0cm 8.0cm]{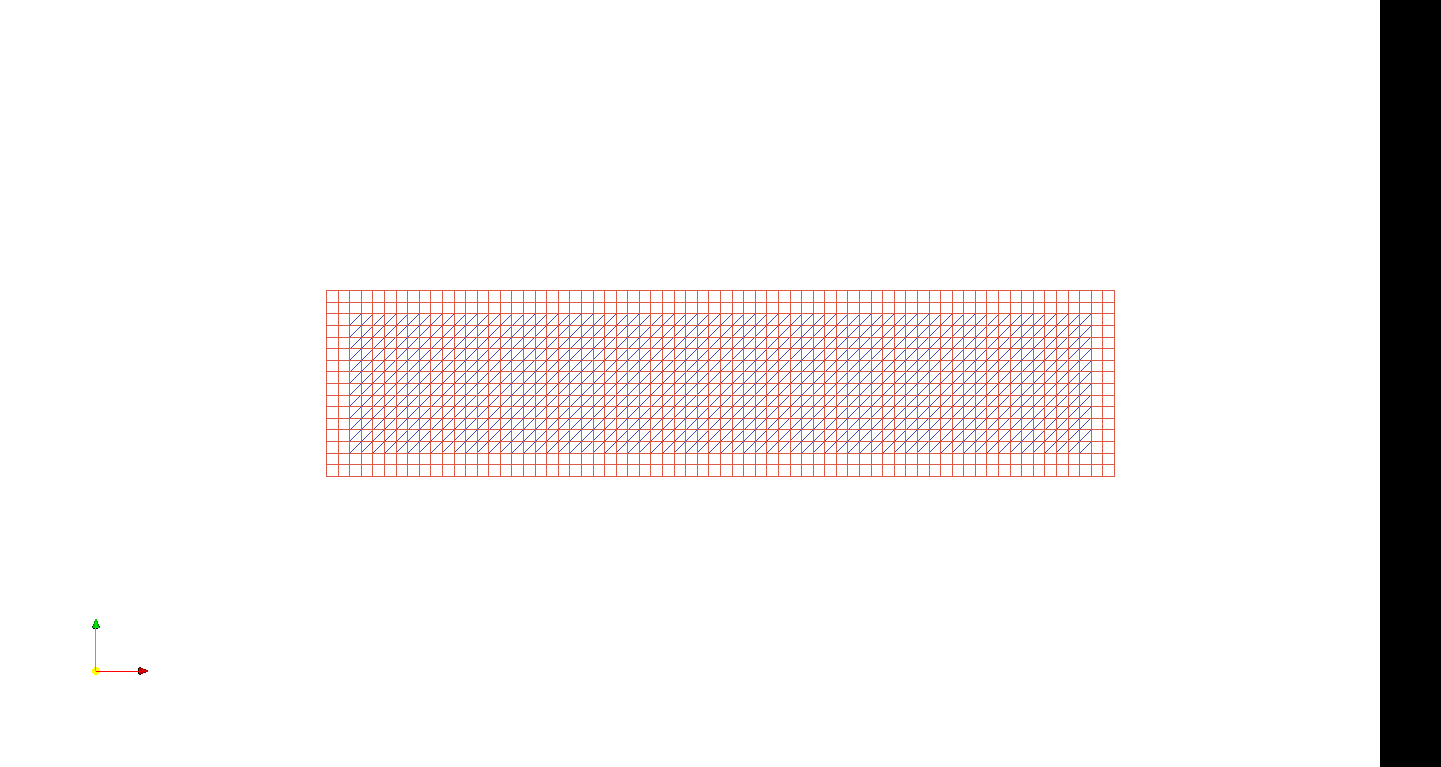}} &
   {\includegraphics[scale=0.2, clip = true, trim = 8.0cm 8.0cm 8.0cm 8.0cm]{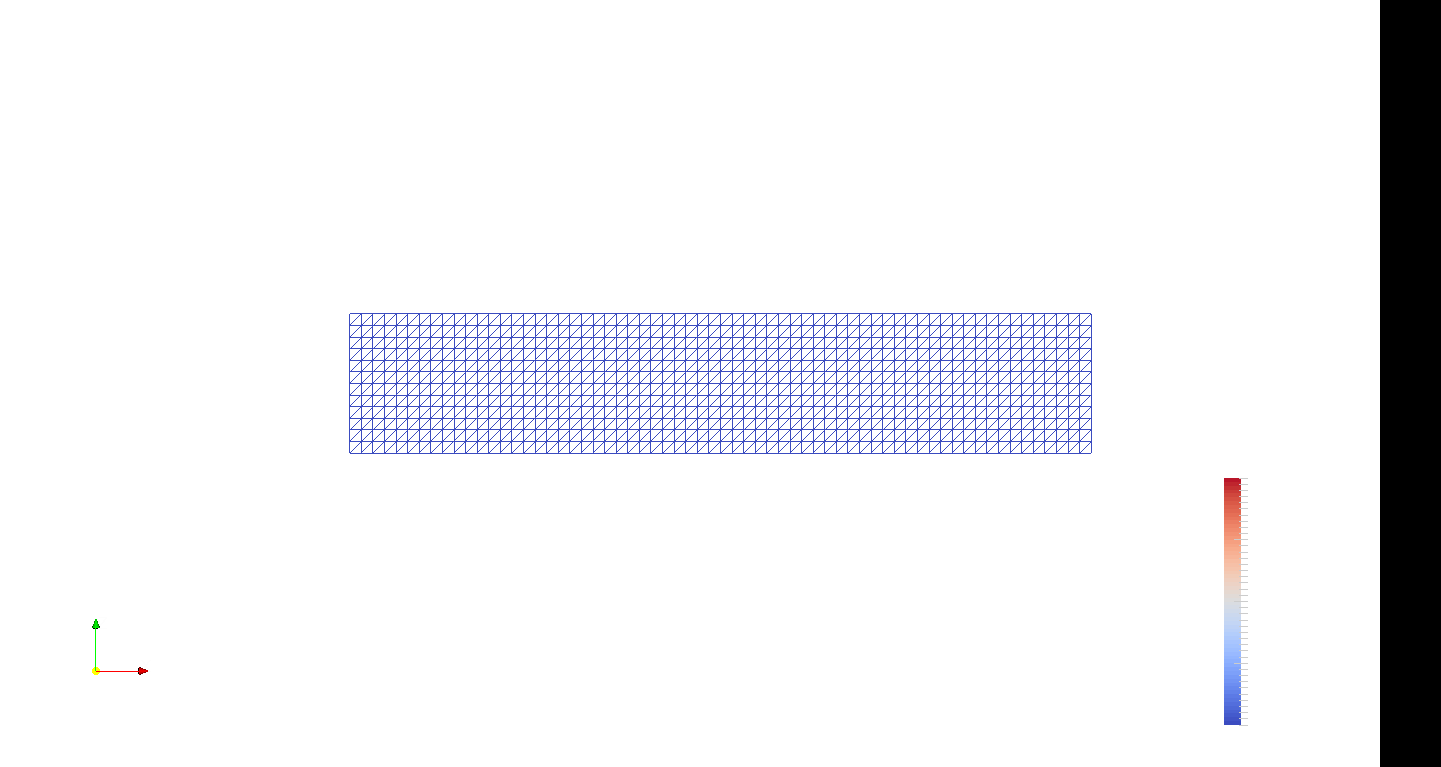}}
 \\
 c) $\Omega = \Omega_{IN} \cup \Omega_{OUT},~ x_1 x_2$ view &  d)  $\Omega_{IN},~ x_1 x_2$ view  \\
 
 \end{tabular}
 \end{center}
 \caption{{\protect\small \emph{a) The hybrid domain  $\Omega= \Omega_{IN} \cup \Omega_{OUT}$. Here, wireframe of $\Omega_{IN}$ is outlined in blue color and  wireframe of $\Omega_{OUT}$ in red color. b)  Wireframe of the inner domain $\Omega_{IN}$. }}}
 \label{fig:fig1}
 \end{figure}
 
\section{Tikhonov functional}

\label{sec:tikhonov}

We reformulate our inverse problem 
as an optimization problem and we seek for the function $c(x) \in
\Omega \diagdown \Omega_{\rm OUT} $. This function should fit to
the space-time observations $\tilde u$ measured at $\partial_1 \Omega$.
Thus, we  minimize  the Tikhonov functional
\begin{equation}
J(c) := J(u, c) = \frac{1}{2} \int_{S_T}(u - \tilde{u})^2 z_{\delta }(t) d \sigma dt +
\frac{1}{2} \gamma  \int_{\Omega}(c -  c_0)^2~~ dx,
\label{functional}
\end{equation}
where $\tilde{u}$ is the observed field in (\ref{2.5}), $u$ satisfies 
(\ref{model1}), $c_{0}$ is the initial guess for $c$, and $\gamma$ is
the regularization parameter.  Here, $z_{\delta }(t)$ is a cut-off
function to impose compatibility
conditions at $\overline{\Omega}_{T}\cap \left\{ t=T\right\} $ for the
adjoint problem (\ref{adjoint1}) which is defined as in
\cite{BCN}.

Let us define the $L_2$ inner product and the norm in $\Omega_T$ and
 $\Omega$, respectively, as
\begin{eqnarray*}
((u,v))_{\Omega_T}    &=& \int_{\Omega} \int_0^T u v~ dx dt, \\
||u||^2  &=& ((u,u))_{\Omega_T}, \\
(u,v)_{\Omega}    &=& \int_{\Omega} u v~ dx, \\
|u|^2  &=& (u,u)_{\Omega}.
\end{eqnarray*}
We also introduce the following spaces of real valued vector functions
\begin{equation}
\begin{split}
H_u^1 &:= \{ w \in H^1(\Omega_T):  w( \cdot , 0) = f_0(x), \partial_t w(\cdot, 0)=f_1(x) \}, \\
H_{\lambda}^1 &:= \{ w \in  H^1(\Omega_T):  w( \cdot , T) = \partial_t w(\cdot, T)= 0\},\\
U^{1} &=H_{u}^{1}(\Omega_T)\times H_{\lambda }^{1}(\Omega_T)\times C\left( \overline{\Omega}\right),\\
U^{0} &=L_{2}\left(\Omega_{T}\right) \times L_{2}\left(\Omega_{T}\right) \times
L_{2}\left( \Omega \right). 
\end{split}
\end{equation}
In our theoretical investigations below we need to reformulate the results of
\cite{BOOK, BKK} for the case of our \textbf{IP}.
Below in this section, $||\cdot||$  denotes $L_2$ norm.

We introduce a noise level $\delta $ in the function $\tilde{u}(x,t)$ in
the Tikhonov functional (\ref{functional}) that corresponds to the
theory of ill-posed problems \cite{BKS, BK,  tikhonov};
\begin{equation}
\tilde{u}(x,t)=  u(x,t, c^*) + \tilde{u}_{\delta
}(x,t);\text{ }  u(x,t, c^*), \tilde{u}_{\delta }\in L_{2}\left(
S_{T}\right), \label{4.247}
\end{equation}
where $u(x,t, c^*)$ is the exact data corresponding to the exact function $c^*$  in (\ref{model1}), and the function $\tilde{u}_{\delta }(x,t)$
represents the error in these data. In other words, we can write 
\begin{equation}
\left\Vert \tilde{u}_{\delta }\right\Vert _{L_{2}\left( S_{T}\right) }\leq \delta .
\label{4.248}
\end{equation}

Let  $Q_2 = L_{2}(S_{T})$ and  $Q_1$ be the finite dimensional
linear space such that
\begin{equation}
Q_1 = \bigcup_{K_h} \mathrm{span} (V(K_h)),
\end{equation}
and
\begin{equation}
V(K_h) = \{ v(x): v(x) \in H^1(\Omega) \},
\end{equation}
where $K_h$ is the finite-element mesh  defined in section \ref{sec:fem}.

Let $G \subset Q_1$ be a closed bounded convex set satisfying conditions \eqref{2.3}.
We introduce the operator $F:G \rightarrow L_{2}(S_T) $ corresponding to the Tikhonov functional (\ref{functional}) as 
\begin{equation}
F(c) \left( x,t\right) := u\mid _{S_T}\in
L_{2}\left(S_T \right),  \label{48}
\end{equation}
where $u(x,t,c) := u(x,t)$ is the weak solution of the problem (\ref{model1})
and thus, depends on the function  $c$.

We impose  assumption that the operator $F:G \rightarrow L_{2}(S_T) $ is one-to-one.
Next, we assume that there exists the exact solution $c^{\ast }\in
G$ of the equation
\begin{equation}
F\left( c^{\ast }\right) = u\left(
x,t,c^{\ast }\right) \mid _{S_T}.  \label{49}
\end{equation}
It follows from our assumption that the operator $F:G \rightarrow L_{2}(S_T)$ is one-to-one and thus, for a given function $ u\left(
x,t,c^{\ast }\right),$ this solution is unique.

We denote by 
\begin{equation}\label{neigh}
V_{d}\left(c\right) =\left\{c^{\prime }\in Q_1:\left\| c^{\prime } - c \right\|   < d,~~ \forall d >0 ~~\forall c  \in Q_1\right\} .
\end{equation}
 We also
assume  that the operator $F$ has the Lipschitz continuous Frech\'{e}t
derivative $F^{\prime }(c) $ for $c \in V_{1}(c^{\ast}),$  such that there exist constants  $N_{1},N_{2} > 0$
\begin{equation}
\left\| F^{\prime }(c) \right\| \leq N_{1},\left\| F^{\prime
}(c_1)  - F^{\prime }(c_2) \right\| \leq N_{2}\left\|
 c_1 - c_2 \right\| ,\forall c_1, c_2 \in V_{1}\left(c^{\ast }\right) .  \label{2.7}
\end{equation}

similar to \cite{BKK} we choose the constant $D= D\left(N_{1},N_{2}\right) =const.>0$ such that
\begin{equation}
\left\| J^{\prime }\left(c_1\right) -J^{\prime }\left(c_2\right) \right\| \leq D\left\| c_1 - c_2\right\| ,\forall c_1, c_2 \in V_{1}(c^*).
\label{2.10}
\end{equation}
Through the paper, similar to \cite{BKK},  we assume that
\begin{eqnarray}
\left\| c_0 - c^{\ast }\right\| &\leq &\delta ^{\xi},~\xi =const.\in \left( 0,1\right) ,  \label{2.11} \\
\gamma &=&\delta ^{\zeta}, \zeta=const.\in \left( 0,\min (\xi, 2( 1- \xi) ) \right ),  \label{2.12}
\end{eqnarray}
where  $\gamma$ is the regularization parameter in (\ref{functional}).
Equation (\ref{2.11}) means that we assume that  initial
guess $c_0$  in (\ref{functional}) is located in a sufficiently
small neighborhood $V_{\delta ^{\xi}}(c^*)$
of the exact solution $c^*$.
From Lemma 2.1 and 3.2 of \cite{BKK} it follows that
conditions (\ref{2.11})- (\ref{2.12}) ensures that  $(c^{\ast }, c_0)$ belong to
an appropriate neighborhood of the regularized solution of the functional (\ref{functional}).

Below we reformulate Theorem 1.9.1.2 of \cite{BOOK} for the Tikhonov
functional (\ref{functional}). Different proofs of this theorem can be found in
\cite{BOOK}, \cite{BakKB} and in \cite{BKK} and are straightly applied to our   case.

The question of stability and uniqueness of our \textbf{IP} is
addressed in \cite{CristofolLiSoc} for the case of the unbounded domain.

 \begin{theorem} \label{theorem1}

   Let $Q_1, Q_2$ are two Hilbert spaces such that $\dim Q_1 <
   \infty$, $G \subset Q_1$ is a closed bounded convex set  satisfying
   conditions \eqref{2.3}, $Q_2 = L_{2}(S_{T})$ and $F:G \rightarrow
   Q_2 $
   is a continuous one-to-one operator.

   Assume that
   the conditions (\ref{4.247})- (\ref{4.248}), (\ref{2.7})-(\ref{2.10}) hold.
   Assume that there exists the exact
   solution $c^* \in G$  of the equation $F(c^*) =0$  for the case of
   the   exact data $u(x,t,c^*)$  in (\ref{4.247}).  Let  the
   regularization parameter  $\gamma$  in (\ref{functional}) be   such that 
\begin{equation}\label{regpar}
\gamma = \gamma(\delta) =\delta ^{2\nu},~~\nu  =const.\in \left( 0,\frac{1}{4}\right), ~\quad \forall \delta
\in \left( 0,1 \right).
\end{equation}
Let  $c_0$  satisfies the  condition  (\ref{2.11}). Then the Tikhonov functional  (\ref{functional})    is strongly convex in the neighborhood $V_{\gamma} \left( \delta \right) (c^*)$  with the strong convexity constant  $\alpha =  \gamma = \delta ^{2\nu}$  such that
\begin{equation}
\left\Vert c_{1} - c_{2}\right\Vert ^{2}\leq \frac{2}{\delta ^{2\nu }}\left(
J'(c_1) - J'(c_2), c_{1} - c_{2}\right),
\text{ }\forall c_1, c_2 \in Q_1.  \label{4.249}
\end{equation}
Next, there exists   the unique regularized solution  $c_{\gamma}$  of
the   functional (\ref{functional}) and this solution  $c_{\gamma} \in
V_{\delta ^{3\nu}/3}(c^*).$ The   gradient method of the
minimization of the functional   (\ref{functional}) which starts at
$c_0$  converges to the regularized solution  $c_{\gamma}$ of this   functional
 and
\begin{equation}\label{accur}
\left\Vert c_{\gamma}  - c^* \right\Vert \leq \xi
\left\Vert c_0 - c^* \right\Vert, ~~\xi \in (0,1).
\end{equation}
    
 \end{theorem}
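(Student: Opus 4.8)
The plan is to specialize the abstract strong-convexity argument behind Theorem~1.9.1.2 of \cite{BOOK} to the functional (\ref{functional}), whose only nonstandard feature is that the data misfit has been absorbed into the operator $F$ of (\ref{opF}). I would first write $J(c)=\tfrac12\int_{S_T}(u(c)-\tilde u)^2 z_\delta\,d\sigma\,dt+\tfrac{\gamma}{2}\|c-c_0\|^2$ and compute its Fr\'echet derivative, which is legitimate because the hypotheses ($u\in H^2(\Omega_T)$, $c\in C^2$, $c^*\in \mathrm{Int}(G)$) guarantee that the solution map $c\mapsto u(\cdot,\cdot,c)$ is differentiable with a bounded, Lipschitz derivative, i.e. (\ref{2.7}). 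Writing $h=c_1-c_2$, the object to estimate is the bilinear form $(J'(c_1)-J'(c_2),h)$, whose regularization part equals exactly $\gamma\,\|h\|^2$.

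The core step is to bound the data part of $(J'(c_1)-J'(c_2),h)$ from below. Via a mean-value/Taylor expansion it splits into a nonnegative term that is quadratic in the first variation of $u$ in the direction $h$, plus an indefinite remainder proportional to the residual $(u(c)-\tilde u)$ paired against the second variation of $u$. Using the boundedness and Lipschitz bounds $N_1,N_2$ of (\ref{2.7}), the indefinite remainder is controlled by a constant times $\|h\|^2$ times a factor that is small on the neighborhood $V_{\gamma}(\delta)(c^*)$: both the radius of that neighborhood and the noise level (\ref{4.248}) are powers of $\delta$, while the regularization weight is $\gamma=\delta^{2\nu}$ with $\nu\in(0,\tfrac14)$. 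Balancing these exponents of $\delta$ lets the remainder be absorbed into half of $\gamma\|h\|^2$, yielding $(J'(c_1)-J'(c_2),h)\ge\tfrac{\gamma}{2}\|h\|^2$, which is precisely (\ref{4.249}) with strong-convexity constant $\alpha=\gamma$.

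Strong convexity then yields the remaining conclusions by standard convex analysis. Existence and uniqueness of the regularized minimizer $c_\gamma$ on the closed convex set $G$ follow because a strongly convex, weakly lower-semicontinuous functional on a closed convex subset of a Hilbert space has a unique minimizer. To place $c_\gamma$ in $V_{\delta^{3\nu}/3}(c^*)$, I would combine (\ref{4.249}) with the variational inequality $(J'(c_\gamma),c-c_\gamma)\ge0$, estimate $J'(c^*)$ from the noise bound (\ref{4.248}), and use the a priori closeness (\ref{2.11}) $\|c_0-c^*\|\le\delta^\xi$, again matching powers of $\delta$. For the gradient method, strong convexity (\ref{4.249}) together with the Lipschitz gradient bound (\ref{2.10}) puts $J$ in the classical setting where gradient descent is a contraction toward the unique minimizer, so the iterates starting at $c_0$ converge linearly to $c_\gamma$; the accuracy estimate (\ref{accur}) is then read off from the location of $c_\gamma$ relative to $c^*$ and $c_0$.

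The main obstacle is the second step: showing that the regularization term $\gamma I$ dominates the indefinite part of the data Hessian. This is exactly the delicate balancing of the three exponents of $\delta$ --- neighborhood radius, noise level, and regularization weight --- that forces the restriction $\nu<\tfrac14$ in (\ref{regpar}); once that inequality is secured, existence and uniqueness, localization of $c_\gamma$, and convergence of gradient descent are routine applications of the strong-convexity machinery already developed in \cite{BOOK,BKK}.
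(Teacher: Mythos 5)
Your sketch of the strong convexity inequality (\ref{4.249}) — Taylor expansion of the misfit, absorption of the indefinite second-order term into $\gamma\Vert h\Vert^2$ using (\ref{2.7}) and the balancing of powers of $\delta$ — is a reasonable reconstruction of the machinery behind Theorem 7.2 of \cite{BKK}; the paper does not re-derive this at all but simply cites that theorem for both the strong convexity and the convergence of the gradient method, so that part of your proposal is compatible with (indeed more detailed than) the paper. The genuine gap is in your last step: the accuracy estimate (\ref{accur}) cannot be ``read off'' from strong convexity and the localization $c_\gamma \in V_{\delta^{3\nu}/3}(c^*)$. What strong convexity gives, using $P_h J'(c_\gamma)=0$ and the bound $\Vert J'(c^*)\Vert \le N_1\delta + \gamma\Vert c^*-c_0\Vert$, is an estimate of the form $\Vert c_\gamma - c^*\Vert \le \frac{2}{\gamma}\Vert J'(c^*)\Vert \le 2N_1\delta^{1-2\nu} + 2\Vert c^*-c_0\Vert$: the factor in front of $\Vert c^*-c_0\Vert$ is $2$, not $\xi\in(0,1)$, and the additive term $\delta^{1-2\nu}$ cannot be absorbed into $\xi\Vert c_0-c^*\Vert$ because (\ref{2.11}) gives only an \emph{upper} bound on $\Vert c_0-c^*\Vert$ — the initial guess may be arbitrarily close to $c^*$, in which case your right-hand side does not shrink accordingly. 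The same objection defeats comparing the radius $\delta^{3\nu}/3$ with $\xi\Vert c_0-c^*\Vert$. The contraction property (\ref{accur}) is precisely the nontrivial content of \cite{BakKB} (``why a minimizer of the Tikhonov functional is closer to the exact solution than the first guess''), and it is not a consequence of strong convexity alone.

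The paper obtains (\ref{accur}) by a mechanism your proposal never invokes: finite dimensionality of the ansatz space makes the admissible set $G^{(2)}$ a closed, bounded, hence compact subset of $L_2$; the forward operator $F^{(2)}$ is assumed one-to-one; the Tikhonov theorem on compact sets then yields a continuous inverse with a modulus of continuity $\omega_F$; bounds on a minimizing sequence ((\ref{bound2}), (\ref{bound3})) give $\Vert c_n - c^*\Vert_1 \le \omega_F\left(\sqrt{\delta^2 + \delta^{2\nu}\Vert c^*-c_0\Vert_1}+\delta\right)$, and for $\delta$ below some $\delta_0(\xi)$ this modulus is dominated by $\xi\Vert c_0-c^*\Vert_1$, which is (\ref{accur}). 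Relatedly, your existence argument (strong convexity plus weak lower semicontinuity on a closed convex set) is not the paper's: the paper explicitly remarks that existence of a minimizer cannot be proven when $\dim H = \infty$ and instead derives existence from compactness of the finite-dimensional constraint set, working with a minimizing sequence. To repair your proposal you would need to import this compactness/injectivity/modulus-of-continuity argument; without it, the central estimate (\ref{accur}) does not follow.
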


The property (\ref{accur}) means that the regularized solution of the Tikhonov
functional (\ref{functional}) provides a better accuracy than the
initial guess $c_0$  if it satisfies condition
(\ref{2.11}).

The next theorem presents the estimate of the norm
$\left\Vert c - c_{\gamma} \right\Vert $
via the norm of the Fr\'{e}chet derivative of the Tikhonov functional (\ref{functional}).

 \begin{theorem} \label{theorem2}
Assume that the conditions of Theorem \ref{theorem1} hold. Then for any function $c
\in V_{\gamma(\delta)}(c^*)$  the following error estimate is valid
\begin{equation}
\begin{split}
\left\Vert c - c_{\gamma(\delta)} \right\Vert 
&\leq \frac{2}{\delta ^{2\nu }}
\left\Vert P_h J^{\prime }(c) \right\Vert \leq \frac{2}{\delta ^{2 \nu }} \left\Vert J^{\prime }(c)\right\Vert\\
&
= \frac{2}{\delta ^{2\nu }} \left\Vert L^{\prime }_c(v(c))\right\Vert =\frac{2}{\delta ^{2\nu }} \left\Vert \int_0^T (\nabla  u(c)) (\nabla  \lambda(c))~dt 
+\gamma  (c - c_0)\right\Vert ,
  \label{4.250}
\end{split}
\end{equation}
where  $c_{\gamma(\delta)}$   is the  minimizer of the  Tikhonov functional (\ref{functional})  computed with the regularization parameter   $\gamma$ and $
P_h: L_{2}\left( \Omega \right) \rightarrow Q_1$  is the operator of
orthogonal projection  of the space $L_{2}\left( \Omega \right) $ on its subspace $Q_1$,  $L^{\prime }_c(u(c))$ is the  Fr\'{e}chet derivative of the Lagrangian (\ref{lagrangian1})  given by (\ref{derfunc}).
 \end{theorem}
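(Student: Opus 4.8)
The plan is to obtain the first inequality as a direct consequence of the strong convexity proved in Theorem~\ref{theorem1}, then to recover the second inequality from the contraction property of the orthogonal projection $P_h$, and finally to read off the two equalities from the already-established gradient formula for the Lagrangian. The only genuinely analytic input is the strong convexity estimate (\ref{4.249}); everything else is either elementary Hilbert-space geometry or bookkeeping from the adjoint-state calculation.

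First I would apply (\ref{4.249}) with $c_1=c$ and $c_2=c_{\gamma(\delta)}$, both of which lie in $H_1$ since $c\in V_{\gamma(\delta)}(c^*)$ and $c_{\gamma(\delta)}\in V_{\delta^{3\nu}/3}(c^*)$ by Theorem~\ref{theorem1}, to get
\begin{equation*}
\left\| c - c_{\gamma(\delta)}\right\|^2 \leq \frac{2}{\delta^{2\nu}}\left(J'(c) - J'(c_{\gamma(\delta)}),\, c - c_{\gamma(\delta)}\right).
\end{equation*}
Because $c_{\gamma(\delta)}$ is an interior minimizer of $J$ over $H_1$, the first-order optimality condition reads $P_h J'(c_{\gamma(\delta)})=0$, and since $c - c_{\gamma(\delta)}\in H_1$ while $P_h$ is the orthogonal projection onto $H_1$, the inner products may be replaced by their projections: $\left(J'(c_{\gamma(\delta)}),c-c_{\gamma(\delta)}\right)=\left(P_hJ'(c_{\gamma(\delta)}),c-c_{\gamma(\delta)}\right)=0$ and $\left(J'(c),c-c_{\gamma(\delta)}\right)=\left(P_hJ'(c),c-c_{\gamma(\delta)}\right)$. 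Applying Cauchy--Schwarz to the surviving term and cancelling one factor $\|c-c_{\gamma(\delta)}\|$ (the bound being trivial otherwise) yields the first inequality $\|c-c_{\gamma(\delta)}\|\leq \frac{2}{\delta^{2\nu}}\|P_hJ'(c)\|$.

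The second inequality is immediate from $\|P_h w\|\leq\|w\|$, the orthogonal projection being a norm-non-increasing map on $L_2(\Omega)$. For the equalities I would invoke the Lagrangian framework: at a point where the state equation (\ref{model1}) and the adjoint equation (\ref{adjoint1}) are both satisfied, the Fréchet derivative of the Tikhonov functional coincides with the partial derivative of the Lagrangian (\ref{lagrangian1}) in the direction $c$, so that $J'(c)=L'_c(v(c))$; substituting the explicit representation (\ref{derfunc}), in which $u(c)$ solves the forward problem and $\lambda(c)$ the adjoint problem, produces the closing expression $\frac{2}{\delta^{2\nu}}\left\|\int_0^T(\nabla u(c))(\nabla\lambda(c))\,dt+\gamma(c-c_0)\right\|$.

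The step I expect to be the main obstacle is the identity $J'(c)=L'_c(v(c))$ together with its integral form: it relies on the full adjoint-state machinery, namely that the terms arising from differentiating the state constraint cancel precisely when $\lambda$ solves (\ref{adjoint1}), leaving only the volume integral in (\ref{derfunc}). Since the paper already provides (\ref{lagrangian1}) and the gradient formula (\ref{derfunc}), this reduces to verifying that the Riesz representative of $J'(c)$ is the stated integrand, after which the norm estimate follows mechanically from the strong convexity step above.
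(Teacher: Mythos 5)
Your proposal is correct and follows essentially the same route as the paper's own proof: strong convexity (\ref{4.249}) applied to the pair $(c, c_{\gamma(\delta)})$, the interior first-order optimality condition $P_h J'(c_{\gamma(\delta)})=0$, replacement of the inner products by their projections since $c-c_{\gamma(\delta)}\in H_1$, Cauchy--Schwarz with cancellation of one factor, the contraction property of $P_h$, and finally the identities (\ref{derfunc}), (\ref{derfunc2}) for the Lagrangian representation. No meaningful differences from the paper's argument.
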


 \begin{proof}

Since $ c_{\gamma} :=c_{\gamma(\delta)}$ is the minimizer of the
functional (\ref{functional}) on the set $G$ and $c_{\gamma} \in
Int\left( G\right),$ then $P_hJ^{\prime }(c_{\gamma}) = 0$, or using
(\ref{derfunc}) we can write
\begin{equation}\label{4.2511}
\begin{split}
P_hJ^{\prime }_{c}(c_{\gamma}) &= 0.
\end{split}
\end{equation}

similar to Theorem 4.11.2 of \cite{BOOK}, since $ c  -  c_{\gamma}  \in Q_1,$ then
\begin{equation*}
\begin{split}
(J^{\prime }(c) &- J^{\prime } (c_{\gamma}), c - c_{\gamma})  =
(P_h J^{\prime }(c) - P_h J^{\prime}(c_{\gamma}),  c - c_{\gamma}).
\end{split}
\end{equation*}
Hence, using  (\ref{4.2511}) and the strong convexity property (\ref{4.249})  we can write that
\begin{equation*}
\begin{split}
\left\Vert c -  c_{\gamma}  \right \Vert ^{2} 
&\leq \frac{2}{\delta ^{2\nu}}
\left( J^{\prime }(c) - J^{\prime }(c_{\gamma}), c - c_{\gamma}\right) \\
&=\frac{2}{\delta^{2\nu }}\left( P_h J^{\prime }(c)  - P_h J^{\prime }(c_{\gamma}), c - c_{\gamma}\right) \\
&=\frac{2}{\delta ^{2\nu }}( P_h J^{\prime }(c),  c - c_{\gamma}) \\ 
&\leq
\frac{2}{\delta ^{2\nu }}\left\Vert P_h J ^{\prime }(c)
 \right\Vert  \cdot \left\Vert c - c_{\gamma} \right\Vert.
\end{split}
\end{equation*}
Thus,  from the expression above we get
\begin{equation}\label{expres}
\left\Vert  c - c_{\gamma} \right\Vert ^{2} \leq \frac{2}{
\delta ^{2\nu }} \left \Vert
 P_h J^{\prime }(c) \right\Vert \cdot \left\Vert  c - c_{\gamma} \right\Vert.
\end{equation}

Using the fact 
\begin{equation*}
\left\Vert P_h J^{\prime}(c) \right\Vert_{L_{2}\left( \Omega \right)} \leq 
\left\Vert J^{\prime }(c ) \right\Vert _{L_{2}\left( \Omega \right)}
\end{equation*}
  together with (\ref{derfunc})  and (\ref{derfunc2}) and dividing the
  expression (\ref{expres})  by $\left\Vert c - c_{\gamma}
  \right\Vert $, we obtain the  inequality (\ref{4.250}). 
 \end{proof}

In our final theorem we  present the error between the computed and exact solutions of the functional  (\ref{functional}).

 \begin{theorem} \label{Theorem3}
    Assume that the conditions of Theorem \ref{theorem1}   hold. Then for any
    function $c \in V_{\gamma(\delta)}(c^*)$  the following error estimate holds
\begin{equation}\label{theorem3}
\left\Vert c - c^* \right\Vert 
\leq \frac{2}{\delta ^{2\nu }} \left\Vert \int_0^T (\nabla  u(c)) (\nabla  \lambda(c))~dt 
+\gamma  (c - c_0)\right\Vert  + \xi \left\Vert c_0 - c^* \right\Vert. 
\end{equation}
 \end{theorem}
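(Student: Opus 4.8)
The plan is to recognize Theorem~\ref{Theorem3} as an immediate corollary of the two preceding theorems, glued together by the triangle inequality through the regularized solution $c_{\gamma(\delta)}$. The target quantity $\left\Vert c - c^* \right\Vert$ compares an arbitrary $c \in V_{\gamma(\delta)}(c^*)$ with the \emph{exact} solution $c^*$, whereas Theorem~\ref{theorem2} controls the distance from $c$ to the \emph{regularized} minimizer $c_{\gamma(\delta)}$, and estimate (\ref{accur}) of Theorem~\ref{theorem1} controls the distance from $c_{\gamma(\delta)}$ to $c^*$. The regularized solution is therefore the natural intermediate point at which to split the error.

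First I would insert $c_{\gamma(\delta)}$ and apply the triangle inequality for the $L_2$ norm,
\begin{equation*}
\left\Vert c - c^* \right\Vert \leq \left\Vert c - c_{\gamma(\delta)} \right\Vert + \left\Vert c_{\gamma(\delta)} - c^* \right\Vert.
\end{equation*}
Next I would bound the first term on the right by the main estimate (\ref{4.250}) of Theorem~\ref{theorem2}, which replaces $\left\Vert c - c_{\gamma(\delta)} \right\Vert$ by
$\frac{2}{\delta^{2\nu}} \left\Vert \int_0^T (\nabla u(c))(\nabla \lambda(c))\,dt + \gamma(c - c_0) \right\Vert$. Then I would bound the second term by the accuracy estimate (\ref{accur}) of Theorem~\ref{theorem1}, namely $\left\Vert c_{\gamma(\delta)} - c^* \right\Vert \leq \xi \left\Vert c_0 - c^* \right\Vert$ with $\xi \in (0,1)$. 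Adding the two bounds yields exactly the claimed inequality (\ref{theorem3}).

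There is essentially no genuine obstacle here, since the substantive analytic work---the local strong convexity, the existence and uniqueness of $c_{\gamma(\delta)}$, and the two component estimates---has already been carried out in Theorems~\ref{theorem1} and~\ref{theorem2}. The only point requiring a line of care is verifying that the hypotheses of both theorems are simultaneously in force: this is guaranteed because the statement of Theorem~\ref{Theorem3} assumes the conditions of Theorem~\ref{theorem1} (which in turn are the conditions of Theorem~\ref{theorem2}) and restricts $c$ to the neighborhood $V_{\gamma(\delta)}(c^*)$ on which the estimate (\ref{4.250}) is valid. Since $c_0$ satisfies (\ref{2.11}) and $\gamma = \delta^{2\nu}$ with $\nu \in (0,\tfrac14)$, the regularized solution lies in $V_{\delta^{3\nu}/3}(c^*) \subset V_{\gamma(\delta)}(c^*)$, so both component estimates apply at the same intermediate point and the triangle-inequality splitting is legitimate.
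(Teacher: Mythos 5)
Your proposal is correct and coincides with the paper's own proof: the paper likewise inserts the regularized solution $c_{\gamma(\delta)}$, applies the triangle inequality, and then bounds the two terms by the estimate (\ref{4.250}) of Theorem \ref{theorem2} and the accuracy estimate (\ref{accur}) of Theorem \ref{theorem1}, respectively. Your additional remarks on the compatibility of hypotheses only make explicit what the paper leaves implicit.
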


 \begin{proof}
Applying Theorem  \ref{theorem2} and inequality (\ref{accur})
 we get  the inequality (\ref{theorem3})
\begin{equation}
\begin{split}
\left\Vert c - c^* \right\Vert &= \left\Vert c - c_{\gamma(\delta)} +  c_{\gamma(\delta)} - c^* \right\Vert \leq  \left\Vert c - c_{\gamma(\delta)}\right\Vert  + \left\Vert c_{\gamma(\delta)} - c^* \right\Vert  \\
&\leq \frac{2}{\delta ^{2\nu }} \left\Vert \int_0^T (\nabla  u(c)) (\nabla  \lambda(c))~dt 
+\gamma  (c - c_0)\right\Vert  + \xi \left\Vert c_0 - c^* \right\Vert.
\end{split}
\end{equation}

 \end{proof}

\section{Lagrangian approach}

In this section, we will present the Lagrangian approach to solve the inverse
problem \textbf{IP}.  
To minimize the Tikhonov functional (\ref{functional}) we introduce the Lagrangian
\begin{equation}\label{lagrangian1}
\begin{split}
L(v) &= J(u, c) 
-  \int_{\Omega_T} \frac{\partial
 \lambda }{\partial t} \frac{\partial u}{\partial t}  ~dxdt  
+   \int_{\Omega_T}( c \nabla  u)( \nabla  \lambda)~dxdt  \\
&  - \int_{\Omega} \lambda(x,0) f_1(x) ~dx - \int_{S_{1,1}} \lambda p(t) ~d \sigma dt    + \int_{S_{1,2}} \lambda \partial_t u ~d\sigma dt
   + \int_{S_2} \lambda \partial_t u  ~d\sigma dt   , \\
\end{split}
\end{equation}
where $v=(u,\lambda, c) \in U^1$.
 We search for a stationary point of (\ref{lagrangian1}) 
with respect to $v$ satisfying $ \forall \bar{v}= (\bar{u}, \bar{\lambda}, \bar{c}) \in U^1$
\begin{equation}
 L'(v; \bar{v}) = 0 ,  \label{scalar_lagr1}
\end{equation}
where $ L^\prime (v;\cdot )$ is the Jacobian of $L$ at $v$.
We can rewrite the equation (\ref{scalar_lagr1}) as
\begin{equation}
 L'(v; \bar{v}) = \frac{\partial L}{\partial \lambda}(v)(\bar{\lambda}) + \frac{\partial L}{\partial u}(v)(\bar{u})  +  \frac{\partial L}{\partial c}(v)(\bar{c}) = 0.  \label{scalar_lagr}
\end{equation}

 To find the Frech\'{e}t derivative (\ref{scalar_lagr1}) of the
 Lagrangian (\ref{lagrangian1}) we consider $L(v + \bar{v}) - L(v)~
 \forall \bar{v} \in U^1$. Then we single out the linear part of the
 obtained expression with respect to $ \bar{v}$.  When we derive the
 Frech\'{e}t derivative we assume that in the Lagrangian
 (\ref{lagrangian1}) function $v=(u,\lambda, c) \in U^1$ can be
 varied independently on each other.
  We assume that $\lambda \left(
 x,T\right) =\partial _{t}\lambda \left( x,T\right) =0$ and seek to
 impose  conditions on the function $\lambda $ such that $ L\left(
 u,\lambda ,c \right) :=L\left( v\right) = J\left( u,c\right).$ Next,
 we use the fact that $\lambda (x ,T) = \frac{\partial
   \lambda}{\partial t} (x,T) =0$ and $u(x,0)= f_0(x), \frac{\partial
   u}{\partial t} (x ,0) = f_1(x)$, as well as $c=1$ on $\partial
 \Omega$, together with boundary conditions $ \partial_n u = 0$ and $
 \partial_n \lambda = 0$ on $S_3$.  The equation (\ref{scalar_lagr1})
 expresses that for all $\bar{u}$,
\begin{equation}\label{forward1}
\begin{split}
0 = \frac{\partial L}{\partial \lambda}(u)(\bar{\lambda}) =
&- \int_{\Omega_T}  \frac{\partial \bar{\lambda}}{\partial t} \frac{\partial u}{\partial t}~ dxdt 
+  \int_{\Omega_T}  ( c \nabla u) (\nabla  \bar{\lambda}) ~ dxdt 
- \int_{\Omega} \bar{\lambda}(x,0) f_1(x) ~dx  \\
&- \int_{S_{1,1}} \bar{\lambda} p(t) ~d \sigma dt   + \int_{S_{1,2}}
\bar{\lambda} \partial_t u ~d\sigma dt    + \int_{S_2} \bar{\lambda}
\partial_t u  ~d\sigma dt,\\
&\forall \bar{\lambda} \in H_{\lambda}^1(\Omega_T),\\
\end{split}
\end{equation}
\begin{equation} \label{control1}
\begin{split}
0 = \frac{\partial L}{\partial u}(u)(\bar{u}) &=
\int_{S_T}(u - \tilde{u})~ \bar{u}~ z_{\delta}~ d \sigma dt
-\int_{\Omega}  \frac{\partial \lambda}{\partial t} (x,0) \bar{u}(x,0) dx  
 - \int_{S_{1,2} \cup S_2} \frac{\partial \lambda}{\partial t} \bar{u} ~d\sigma dt \\   
&-  \int_{\Omega_T}  \frac{\partial \lambda}{\partial t} \frac{\partial \bar{u}}{\partial t}~ dxdt
 + \int_{\Omega_T} ( c \nabla  \lambda) (\nabla  \bar{u})  ~ dxdt,~~\forall \bar{u} \in H_{u}^1(\Omega_T).\\
\end{split}
\end{equation}
Finally, we obtain the equation which expresses 
stationarity  of the gradient
with respect to  $c$  :
\begin{equation} \label{grad1} 
0 = \frac{\partial L}{\partial c}(u)(\bar{c})
 =    \int_{\Omega_T} (\nabla  u) (\nabla  \lambda) \bar{c} ~dxdt 
+\gamma \int_{\Omega} (c - c_0) \bar{c}~dx,~ x \in \Omega.
\end{equation}

The equation (\ref{forward1}) is the weak formulation of the state equation
(\ref{model1}) and the equation (\ref{control1}) is the weak
formulation of the following adjoint problem
\begin{equation}
\begin{split} \label{adjoint1}
 \frac{\partial^2 \lambda}{\partial t^2} - 
  \nabla \cdot (c \nabla  \lambda)  &= -  (u - \tilde{u})|_{S_T} z_{\delta} ~  \mbox{ in } \Omega_T,   \\
\lambda(\cdot, T)& =  \frac{\partial \lambda}{\partial t}(\cdot, T) = 0, \\
\partial _{n} \lambda& = \partial _{t} \lambda,~\mbox{on}~ S_{1,2},
\\
\partial _{n} \lambda& = \partial _{t} \lambda,~\mbox{on}~ S_2, \\
\partial _{n} \lambda& =0,~\mbox{on}~ S_3.
\end{split}
\end{equation}
We note that we have positive sign here in absorbing boundary
conditions. However, after
discretization in time of these conditions we will obtain the same
schemes for computation of $\lambda^{k-1}$ as for the computation of
$u^{k+1}$ in the forward problem since we solve the adjoint problem
backward in time.

Let now the functions  $u(c), \lambda(c)$ be the exact solutions of the
forward and adjoint problems, respectively, for the known function $c$
satisfying condition (\ref{accur}). Then with $v(c) = (u(c),
\lambda(c), c) \in U^1$  and using the fact that  for exact solutions  $u(c), \lambda(c)$ from (\ref{lagrangian1}) we have
\begin{equation}
J( u(c), c) = L(v(c))
\end{equation}
~ and assuming that solutions $u(c), \lambda(c) $  are sufficiently stable (see Chapter 5 of book \cite{lad} for details),   we can write that the  Frech\'{e}t derivative of the Tikhonov functional is  given by
\begin{equation}\label{derfunc}
\begin{split}
J'(c) := J'(u(c), c) &=  \frac{\partial J}{\partial c}(u(c), c)
  =  \frac{\partial L}{\partial c}(v(c))
.
\end{split}
\end{equation}
Inserting (\ref{grad1})  into  (\ref{derfunc})
we get
\begin{equation} \label{derfunc2}
\begin{split}
J'(c)(x) &:= J'(u(c),c)(x) =
 \int_0^T (\nabla  u(c)) (\nabla  \lambda(c))  (x,t)~dt 
+\gamma  (c - c_0)(x).
\end{split}
\end{equation}

We note that the Lagrangian (\ref{lagrangian1}) and the optimality
conditions (\ref{forward1}), (\ref{control1}) will be the same, when
the homogeneous initial conditions are used in the model problem
(\ref{model1}), and only the terms containing the initial conditions will
disappear.

\section{Finite element method for the solution of an optimization problem}
\label{sec:fem}

In this section, we formulate the finite element method for the
solution of the forward problem (\ref{model1}) and the adjoint problem
(\ref{adjoint1}). We also  present a conjugate gradient method for the
solution of our \textbf{IP}.

\subsection{Finite element discretization}
\label{sec:fem1}

 We discretize $\Omega_{FEM} \times (0,T)$ denoting by $K_h = \{K\}$
 the partition of
 the domain $\Omega_{FEM}$ into tetrahedra $K$ ($h=h(x)$ being a mesh function,
 defined as $h |_K = h_K$, representing the local diameter of the elements),
 and we let $J_{\tau}$ be a partition of the time interval $(0,T)$ into time
sub-intervals $J=(t_{k-1},t_k]$ of uniform length $\tau = t_k - t_{k-1}$. We
 assume also a minimal angle condition on the $K_h$ \cite{Brenner}.

To formulate the finite element method,  we
 define the finite element spaces $C_h$, $W_h^u$ and $W_h^{\lambda}$.
First we introduce the finite element trial space $W_h^u$ for  $u$ defined by
\begin{equation}
W_h^u := \{ w \in H_u^1: w|_{K \times J} \in  P_1(K) \times P_1(J),  \forall K \in K_h,  \forall J \in J_{\tau} \}, \nonumber
\end{equation}
where $P_1(K)$ and $P_1(J)$ denote the set of piecewise-linear functions on $K$
and $J$, respectively.
We also introduce the finite element test space  $W_h^{\lambda}$ defined by
\begin{equation}
W_h^{\lambda} := \{ w \in H_{\lambda}^1: w|_{K \times J} \in  P_1(K) \times P_1(J),  \forall K \in K_h,  \forall J \in J_{\tau} \}. \nonumber
\end{equation}

To approximate function
$c(x)$   we will use the space of piecewise constant functions $C_{h} \subset L_2(\Omega)$, 
\begin{equation}\label{p0}
C_{h}:=\{u\in L_{2}(\Omega ):u|_{K}\in P_{0}(K),\forall K\in  K_h\}, 
\end{equation}
where $P_{0}(K)$ is the piecewise constant function on $K$.

Next, we define $V_h = W_h^u \times W_h^{\lambda} \times C_h$.
Usually $\dim V_{h}<\infty $ and $V_{h}\subset U^{1}$ as a set and we
consider $V_{h}$ as a discrete analogue of the space $U^{1}.$ We
introduce the same norm in $V_{h}$ as the one in $U^{0},\left\Vert
\bullet \right\Vert _{V_{h}}:=\left\Vert \bullet \right\Vert
_{U^{0}}$, from which it follows that in finite dimensional spaces all
norms are equivalent and in our computations we compute coefficients
in the space $C_h$.  The finite element method now reads: Find $v_h
\in V_h$, such that
\begin{equation}
L'(v_h)(\bar{v})=0 ~\forall
\bar{v} \in V_h .  \label{varlagr}
\end{equation}

Using (\ref{varlagr}) we can write the finite element method for the
forward problem (\ref{model1}) (for convenience we will use here and in section \ref{sec:discrete}  $f_0=f_1=0$ in $\Omega_T$): Find $u_h
\in W_h^u$, such that $\forall \bar{\lambda} \in W_h^\lambda$ and for known $c_h \in C_h$,
\begin{equation}\label{varforward}
\begin{split}
&- \int_{\Omega_{T}}  \frac{\partial \bar{\lambda}}{\partial t} \frac{\partial u_h}{\partial t}~ dxdt 
- \int_{S_{1,1}} p(t) \bar{\lambda} ~ d\sigma dt \\
&+  \int_{S_{1,2} \cup S_2} \partial_t u_h  \bar{\lambda} ~ d\sigma dt
+  \int_{\Omega_{T}}  ( c_h \nabla u_h) (\nabla  \bar{\lambda}) ~ dxdt = 0.
\end{split}
\end{equation}
Similarly, the finite element method for the  adjoint problem (\ref{adjoint1}) in $\Omega_T$ reads: Find $\lambda_h \in W_h^\lambda$, such that $\forall \bar{u} \in W_h^u$ and for known $u_h \in W_h^u$, $c_h \in C_h$,
\begin{equation} \label{varadjoint}
\begin{split}
&-  \int_{\Omega_{T}}  \frac{\partial \lambda_h}{\partial t} \frac{\partial \bar{u}}{\partial t}~ dxdt +  \int_{S_T} (u_h - \tilde{u}) z_{\sigma} \bar{\lambda} 
~ d\sigma dt \\
&- \int_{S_{1,2} \cup S_2} \partial_t \lambda_h  \bar{u} ~ d\sigma dt
 + \int_{\Omega_{T}} ( c_h \nabla  \lambda_h) (\nabla  \bar{u})  ~ dxdt = 0.
\end{split}
\end{equation}

\subsection{Fully discrete scheme}
\label{sec:discrete}

We expand functions $u_h(x,t)$ and $\lambda_h(x,t)$ in terms of the
standard continuous piecewise linear functions
$\{\varphi_i(x)\}_{i=1}^M$ in space and $\{\psi_k(t)\}_{k=1}^N$ in
time, substitute them into (\ref{varforward}) and (\ref{varadjoint}),
and compute explicitly all time integrals which will appear in the
system of discrete equations. Finally, we obtain the following system
of linear equations for the forward and adjoint problems
(\ref{model1}), (\ref{adjoint1}), correspondingly (for
convenience we consider here $f_0=f_1=0$):
\begin{equation} \label{femod1}
\begin{split}
 M (\mathbf{u}^{k+1} - 2 \mathbf{u}^k  + \mathbf{u}^{k-1})  &=  
\tau^2  G^k    - \tau^2  K \mathbf{u}^k - \frac{1}{2}\tau M_{\partial \Omega} (\mathbf{u}^{k+1}- \mathbf{u}^{k-1}) ,   \\
M (\boldsymbol{\lambda}^{k+1} - 2 \boldsymbol{\lambda}^k + \boldsymbol{
\lambda}^{k-1}) &=  -\tau^2  S^k - \tau^2  K \boldsymbol{\lambda}^k + \frac{1}{2}\tau M_{\partial \Omega} (\boldsymbol{\lambda}^{k+1}- \boldsymbol{\lambda}^{k-1}), \\
\end{split}
\end{equation}
with initial  conditions :
\begin{eqnarray}
u(\cdot, 0)&= \frac{\partial u}{\partial t}(\cdot, 0) = 0, \\
\lambda (\cdot,T) &= \frac{\partial \lambda}{\partial t} (\cdot,T) =0.
\end{eqnarray}
  Here, $M$ and $M_{\partial \Omega}$ are the block mass matrix in space and mass matrix at the boundary $\partial \Omega$, respectively, $K$ is the block
  stiffness matrix, $G^k$ and $S^k$ are  load vectors at time level $t_k$,
  $\mathbf{u}^k$ and $ \boldsymbol{\lambda}^k$ denote the nodal values
  of $u_h(\cdot,t_k)$ and $\lambda_h(\cdot,t_k)$, respectively and $\tau$ is
  a time step.  For details of obtaining this system of discrete
  equations and computing the time integrals 
in it, as well as for obtaining then the system (\ref{femod1}), we
  refer to  \cite{BAbsorb}.

Let us define the mapping $F_K$ for the reference element $\hat{K}$
such that $F_K(\hat{K})=K$ and let $\hat{\varphi}$ be the piecewise
linear local basis function on the reference element $\hat{K}$ such
that $\varphi \circ F_K = \hat{\varphi}$.  Then the explicit formulas
for the entries in system (\ref{femod1}) at each element $K$ can be
given as:
\begin{equation}
\begin{split}
  M_{i,j}^{K} & =    ( ~\varphi_i \circ F_K, \varphi_j \circ F_K)_K, \\
  K_{i,j}^{K} & =   ( c_i \nabla  \varphi_i \circ F_K, \nabla  \varphi_j \circ F_K)_K,\\
 G_{j}^{K}&= (p^k, \varphi_j \circ F_K )_{K \in S_{1,1}}, \\
  S_{j}^{K}&= ((u_{h_{i,k}} - \tilde{u}_{i,k})|_{\partial_1 \Omega} z_{\delta}, \varphi_j \circ F_K )_{K}, \\
\end{split}
 \end{equation}
where $(\cdot,\cdot)_K$ denotes the $L_2(K)$ scalar product and
  $\partial K$ is the part of the boundary of element $K$ which lies at $\partial \Omega_{FEM}$.
Here, $u_{h_{i,k}}=u(x_i, t_k)$ are computed solutions of the forward
problem (\ref{model1}), and $\tilde{u}_{i,k}=\tilde{u}(x_i, t_k) $ are discrete
measured values of $\tilde{u}(x,t)$ at $S_T$ at the point $x_i \in K_h$ and time moment $t_k \in J_k$.

To obtain an explicit scheme we approximate $M$ with the lumped mass matrix
$M^{L}$ (for further details, see \cite{Cohen}). Next, we multiply (\ref{femod1}) by
$(M^{L})^{-1}$ and get the following  explicit method inside $\Omega_{FEM}$:
\begin{equation}   \label{fem}
\begin{split}
  \mathbf{u}^{k+1} = & - \tau^2 (M^{L})^{-1}  G^k + (2- \tau^2  (M^{L})^{-1} K)\mathbf{u}^k    -\mathbf{u}^{k-1},  \\
  \boldsymbol{\lambda}^{k-1} = &-\tau^2 (M^{L})^{-1} S^k  
  + (2  - \tau^2  (M^{L})^{-1} K) \boldsymbol{\lambda}^k  -\boldsymbol{\lambda}^{k+1}.  \\
\end{split}
\end{equation} 
In the formulas above the terms with $M_{\partial \Omega}$ disappeared since
we  used schemes  (\ref{fem}) only inside $\Omega_{FEM}$.

Finally, for reconstructing $c(x)$ we can use a gradient-based method
with an appropriate initial guess values of $c_0$ which satisfies 
the condition (\ref{2.11}).
  We have the following expression for the discrete
version of the gradient with respect to coefficient $c$ in
(\ref{grad1}):
\begin{equation} \label{gradient1}
g_h(x) =   \int_{0}^T \nabla u_h \nabla  \lambda_h dt + \gamma (c_h - c_0).
\end{equation}

Here, $\lambda_h$  and $u_h$ are computed values of the adjoint
and forward problems, respectively,  using explicit schemes
(\ref{fem}), and $c_h$   is approximated value of the computed
coefficient.

\subsection{The  algorithm}
\label{subsec:ad_alg}

We use conjugate gradient method for the iterative update of
approximations $c_{h}^{m}$ of the function $c_{ h}$, where $m$ is the
number of iteration in our optimization procedure. We denote
\begin{equation}
\begin{split}
{g}^m(x) = \int_{0}^T \nabla  u_h^m \nabla  \lambda_h^m  dt + \gamma (c_h^m - c_0), \\
\end{split}
\end{equation}
where functions $u_{h}\left( x,t,c_{h}^{m}\right) ,~\lambda _{h}\left(x,t,c_{h}^{m}\right) $\ are computed by solving
the state and the adjoint problems
with $c:=c_{h}^{m}$.\\

\textbf{Algorithm}

\begin{itemize}
\item[Step 0.] \hspace{0.6cm} Choose a mesh $K_{h}$ in $\Omega$ and a time partition $J$ of the time interval $\left( 0,T\right) .$
Start with the initial approximation $c_{h}^{0}= c_0$ and compute the
sequences of $c_{h}^{m}$ via the following steps:

\item[Step 1.]  \hspace{0.6cm} Compute solutions $u_{h}\left( x,t,c_{h}^{m}\right) $
  and $\lambda _{h}\left( x,t,c_{h}^{m}\right) $ of the state (\ref{model1})
   and the adjoint (\ref{adjoint1}) problems on $K_{h}$ and
  $J$ using explicit schemes (\ref{fem}).

\item[Step 2.]  \hspace{0.6cm} Update the coefficient $c_h:=c_{h}^{m+1}$ 
  on $K_{h}$ and $J$ using the conjugate gradient method
\begin{equation*}
\begin{split}
c_h^{m+1} &=  c_h^{m}  + \alpha^m d^m(x),\\
\end{split}
\end{equation*}
where $\alpha$ is the step-size in the gradient update \cite{Peron}
which is computed as
$$
 \alpha^m = \frac{((g^m, d^m))}{\gamma \| d^m \|^2},
$$
and
\begin{equation*}
\begin{split}
 d^m(x)&=  -g^m(x)  + \beta^m  d^{m-1}(x),
\end{split}
\end{equation*}
with
\begin{equation*}
\begin{split}
 \beta^m &= \frac{|| g^m(x)||^2}{|| g^{m-1}(x)||^2},
\end{split}
\end{equation*}
where $d^0(x)= -g^0(x)$.
\item[Step 3.]   \hspace{0.6cm} Stop computing $c_{h}^{m}$ and obtain the function
  $c_h$ if either $||g^{m}||_{L_{2}( \Omega)}\leq \theta$ or norms
  $||g^{m}||_{L_{2}(\Omega)}$ are stabilized. Here, $\theta$ is the
  tolerance in updates $m$ of the gradient method.  Otherwise set $m:=m+1$
  and go to step 1.

\end{itemize}

\section{Numerical Studies}
\label{sec:Numer-Simul}

In this section, we present numerical simulations of the reconstruction
of unknown function $c(x)$ of the equation (\ref{model1}) inside a
domain $\Omega_{FEM}$ using the algorithm of section
\ref{subsec:ad_alg}.

 For computations of the numerical approximations $u_h$ of the forward
 and $\lambda_h$ of the adjoint problems in step 1 of the algorithm of
 section \ref{subsec:ad_alg}, we use the domain decomposition method of
 \cite{BAbsorb}. We decompose $\Omega$ into two subregions
 $\Omega_{IN}$ and $\Omega_{OUT}$ as described in section
 \ref{sec:model}, and we define $\Omega_{FEM} := \Omega_{IN}$ and
 $\Omega_{FDM}:=\Omega_{OUT} $ such that $\Omega = \Omega_{FEM} \cup
 \Omega_{FDM}$.  In
 $\Omega_{FEM}$ we use finite elements as described in section
 \ref{sec:fem1}.
 In
 $\Omega_{FDM}$ we will use finite difference method.  The boundary
 $\partial \Omega$ is such that $\partial \Omega =\partial _{1} \Omega
 \cup \partial _{2} \Omega \cup \partial _{3} \Omega$, see section
 \ref{sec:model} for description of this boundary.

We assume that the conductivity function $c(x)$ is known inside
$\Omega_{FDM}$ and we set it to be $c(x) =1$.  The goal of our
numerical tests is to reconstruct small inclusions  with
$c=4.0$ inside every small scatterer, which can represent defects
inside a waveguide. We also test our reconstruction algorithm when $c(x)$ represents a
smooth function. We consider four different case studies with different
geometries of the scatterers:
\begin{itemize}
\item [i)] 3 scatterers of different size located on the same plane with respect to the wave propagation; 
\item [ii)] 3 scatterers of different size non-uniformly located inside the waveguide;
\item [iii)]~ $c(x)$ is smooth function  which is presented by one spike of Gaussian  function;
\item [iv)]~ $c(x)$ is smooth function presented by three  spikes of Gaussian  functions.
\end{itemize}
 Figures \ref{fig:fig2} and
\ref{fig:exact_gaus} present the considered geometries of the case studies.

In \cite{BAbsorb} it  was shown that the best reconstruction results for our set-ups 
are  obtained  for the wave length $\lambda$ with the frequency  $\omega = 40$ in the initialization of a plane wave  in (\ref{f}). Thus, 
for all test cases i)-iv)
we  choose $\omega = 40$  in (\ref{f})  and solve
the model problem (\ref{model1}) with non-homogeneous initial
condition $f_0(x)$ and with $f_1(x) =0$ in (\ref{model1}).
In all our tests we  initialized initial conditions at backscattered side $\partial_1 \Omega$ as
\begin{equation}\label{initcond}
\begin{split}
u(x,0) &= f_0(x)=\exp^{-(x_1^2 + x_2^2 + x_3^3)}  \cdot \cos  t|_{t=0} = \exp^{-(x_1^2 + x_2^2 + x_3^3)}  , \\
\frac{ \partial u}{\partial t} (x,0) &=  f_1(x)= -\exp^{-(x_1^2 + x_2^2 + x_3^3)} \cdot  \sin t|_{t=0} \equiv 0.
\end{split}
\end{equation}

 The domain decomposition is done in the same way, as described above, for all
 of the case studies. Next, we introduce dimensionless spatial
 variables $x^{\prime}= x/\left(1m\right)$  such  that the domain
 $\Omega_{FEM}$ is transformed into dimensionless computational domain
 \begin{equation*}
 \Omega_{FEM} = \left\{ x= (x_1,x_2,x_3);~ x_1 \in (
 -3.2,3.2),~ x_2 \in (-0.6,0.6),~ x_3 \in (-0.6,0.6) \right\} .
 \end{equation*}
   The dimensionless size of our computational domain
 $\Omega$ for the forward problem is
 \begin{equation*}
 \Omega = \left\{ x= (x_1,x_2,x_3); ~x_1 \in (
 -3.4,3.4),~ x_2 \in (-0.8,0.8),~ x_3 \in (-0.8,0.8) \right\} .
 \end{equation*}
 The space mesh in $\Omega_{FEM}$ and in $\Omega_{FDM}$ consists of
 tetrahedral and cubes, respectively. 
  We choose the mesh size $h=0.1$
 in our geometries in the domain decomposition FEM/FDM method, as well as in the
 overlapping regions between $\Omega_{FEM}$  and $\Omega_{FDM}$.

  We generate backscattered measurements $\tilde{u}$ at $S_T$ in $\Omega$ by a
  single  plane wave $p(t)$ initialized at $\partial_1 \Omega$
  in time $T=[0,3.0]$ such that
 \begin{equation}\label{f}
 \begin{split}
 p\left( t\right) =\left\{ 
 \begin{array}{ll}
 \sin \left( \omega t \right) ,\qquad &\text{ if }t\in \left( 0,\frac{2\pi }{\omega }
 \right) , \\ 
 0,&\text{ if } t>\frac{2\pi }{\omega }.
 \end{array}
 \right. 
 \end{split}
 \end{equation}

For the generation of the simulated backscattered data  for cases i)-ii) we first define exact
function ${c(x)}=4$ inside small scatterers, and
$c(x)=1$ at all other points of the computational domain
$\Omega_{FEM}$.  The function $c(x)$ for cases iii)-iv) is defined in
sections \ref{sec:caseiii} and \ref{sec:caseiv}, respectively.  Next, we
solve the forward problem (\ref{model1}) on a locally refined mesh in
$\Omega_{FEM}$ in time $T=[0,3.0]$ with a plane wave as in
(\ref{f}). This allows us to avoid problem with variational crimes.
Since we apply explicit schemes (\ref{femod1})  in our computations, we use 
the time step $\tau=0.006$ which satisfies  the CFL condition, see details in
\cite{BAbsorb, CFL67}.

 \begin{figure}
 \begin{center}
 \begin{tabular}{cc}
 {\includegraphics[scale=0.22,angle=-90, trim = 0.0cm 0.0cm 0.0cm 0.0cm, clip=true,]{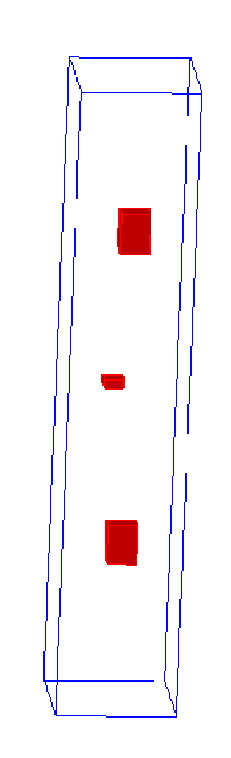}} &
 {\includegraphics[scale=0.22,angle=-90,  trim = 0.0cm 0.0cm 0.0cm 0.0cm, clip=true,]{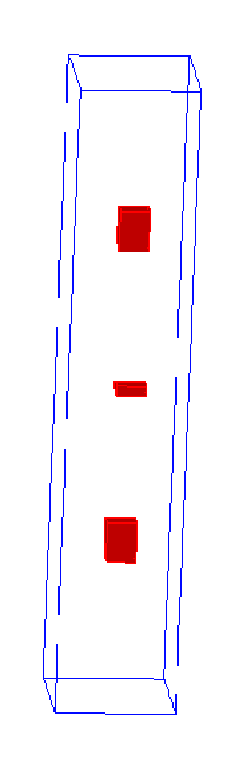}} 
 \\
 a) Test i)  & b) Test ii) \\
\end{tabular}
 \end{center}
 \caption{Exact
 values of the conductivity function in cases i) and  ii) are:   $c=4.0$ inside all small scatterers of a)-b), and
 $c=1.0$  everywhere else in $\Omega_{FEM}$.  }
 \label{fig:fig2}
 \end{figure}

 For all case studies, we start the optimization algorithm with guess
  values of the parameter $c(x)=1.0$ at all points in $\Omega$.  Such
  choice of the initial guess provides a good reconstruction for
  functions $c(x)$ and corresponds to starting the gradient algorithm
  from the homogeneous domain, see also \cite{BAbsorb, BCN,  BJ} for a similar
  choice of initial guess. In tests i)-ii) the minimal and maximal
  values of the functions $c(x)$ in our computations belongs to the
  following set of admissible parameters
 \begin{equation}\label{admpar}
 \begin{split}
  M_{c} \in \{c\in C(\overline{\Omega })|1\leq c(x)\leq 5\}.\\
 \end{split}
 \end{equation}

We regularize the solution of the inverse problem by starting
computations with regularization parameter $\gamma =0.01$ in
(\ref{functional}) which satisfies the condition (\ref{2.12}).
Our computational studies have shown that such choice of
the regularization parameter is optimal one for the solution of our IP
since it gives smallest relative error in the reconstruction of the
function $c(x)$. We refer to \cite{BKS, BK, Engl}, and references
therein, for different techniques for the choice of a regularization
parameter.  The tolerance $\theta$ at step 3 of our algorithm of
section \ref{subsec:ad_alg} is set to $\theta=10^{-6}$.
 
In our numerical simulations we have considered an additive noise
$\sigma$ introduced to the simulated boundary data $\tilde{u}$ in
(\ref{2.5}) as
\begin{equation}\label{additive}
u_{\sigma}\left( x^{i},t^{j}\right)  = \tilde{u}\left( x^{i},t^{j}\right) \left[ 1+ 
 \frac{\sigma }{100}\right].
\end{equation}
Here,
$x^{i}\in\partial \Omega $ is a mesh point at the boundary $\partial
\Omega ,\, t^{j}\in \left( 0,T\right) $ is a mesh point in the time mesh $J_{\tau}$,
and $\sigma$ is the
noise level in percents.

We use a post-processing procedure to get images of figures
 \ref{fig:rec_caseii},
\ref{fig:rec_caseiii}, \ref{fig:1gausnoise10} -
\ref{fig:3gausnoise10}. This procedure is as follows: assume, that
the functions $c^m(x)$ are our reconstructions obtained by the algorithm of
section \ref{subsec:ad_alg} where $m$ is the number of iterations in
the conjugate gradient algorithm when we have stopped to compute $c(x)$. Then to get
our final images, we set
 \begin{equation}\label{postproc}
 \widetilde{c}^m(x)=\left\{ 
 \begin{array}{ll}
 c^m(x), & \text{ if }c^m(x)>P \max\limits_{\Omega_{FEM} }c^m(x), \\ 
 1, & \text{ otherwise. }%
 \end{array}
 \right. 
 \end{equation}
The values of the parameter $P \in (0,1)$ depends on the concrete
reconstruction of the function $c(x)$ and plays the roll of a cut-off
parameter for the function $c(x)$.  If we choose $P \approx 1$ then we
will cut almost all reconstruction of the function $c(x)$. Thus,
values of $P$ should be chosen numerically.
For tests i), ii) we have used $P=0.7$ and for case studies
iii)-iv) we choose $P=0.5$.

\begin{table}[h] 
{\footnotesize Table 1. \emph{Computational results of the reconstructions
    in cases i)-iv) together with computational errors in achieved
    contrast in percents. Here, $\overline{N}$ is the final iteration
    number $m$ in the conjugate gradient method of section
    \ref{subsec:ad_alg}.}}  \par
\vspace{2mm}
\centerline{
\begin{tabular}{|c|c|}
 \hline
   $\sigma=3\%$ &  $\sigma = 10\%$ 
 \\
 \hline
\begin{tabular}{l|l|l|l} \hline
Case & $\max_{\Omega_{FEM}} c_{\overline{N}}$ &  error, \% & $\overline{N}$  \\ \hline
i) & $2.21 $ & 44.75 &$7  $   \\
ii) & $2.07$ &  48.25 & $7$   \\
iii) & $5.91$ & 1.5 & $12$   \\
iv) & $5.09$ & 15.2  &$15$   \\
\end{tabular}
 & 
\begin{tabular}{l|l|l|l} \hline
Case & $\max_{\Omega_{FEM}} c_{\overline{N}}$ &    error, \% & $\overline{N}$  \\ \hline
i) & $3.13$ & 21.75& $9 $   \\
ii) & $3.06$ & 23.5 & $9$   \\
iii) & $4.84$ & 19.3 & $16$   \\
iv) & $5.87$ & 2.2 &$18$   \\
\end{tabular} 
\\
\hline
\end{tabular}}
\end{table}

\subsection{Test case i)}

\label{sec:casei}

In this example we performed computations with two noise levels in
data: $\sigma = 3\%$ and $\sigma = 10 \%$. 
 Figure \ref{fig:data_caseii} presents typical
 behavior of noisy backscattered data in this case.
The results of
reconstruction for both noise levels are presented in figure
\ref{fig:rec_caseii}. We observe that the location of all inclusions
in $x_1 x_2$ direction is imaged very well. However, the location in the $x_3$
direction should still be improved.

 It follows from figure
\ref{fig:rec_caseii} and table 1 that the imaged contrast in the function  $c(x)$ is
$2.21:1=\max_{\Omega_{FEM}} c_{7}:1 $, where $n:=\overline{N}=7$ is our
final iteration number in the conjugate gradient method.  Similar observation is valid from figure
\ref{fig:rec_caseii} and table 1 for noise level 10 \% where imaged contrast in the function  $c(x)$ is
$3.13:1=\max_{\Omega_{FEM}} c_{9}:1 $, where $n:=\overline{N}=9$ is our
final iteration number.

 \begin{figure}[tbp]
 \begin{center}
 \begin{tabular}{ccc}
 {\includegraphics[scale=0.32,  clip=]{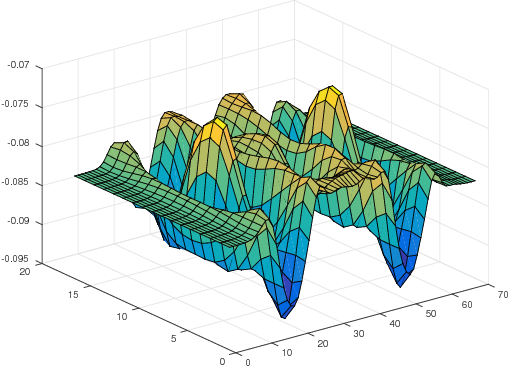}} &
 {\includegraphics[scale=0.32,  clip=]{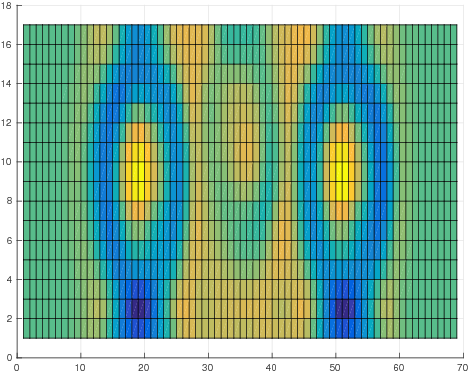}} \\
 a) prospect view & b)  $ x_1 x_2$  view \\
 \end{tabular}
 \end{center}
 \caption{Test case i). Behavior of the noisy backscattered data at time $t=1.8$ with $\sigma = 3 \%$ in (\ref{additive}). }
 \label{fig:data_caseii}
 \end{figure}

\begin{figure}
 \begin{center}
 \begin{tabular}{cc}
 {\includegraphics[scale=0.23, angle=-90, trim = 1.0cm 1.0cm 1.0cm 1.0cm, clip=true,]{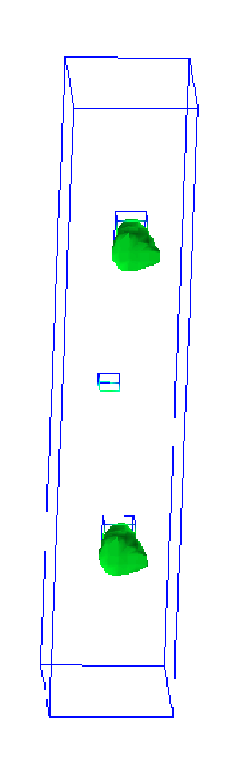}} &
 {\includegraphics[scale=0.23,angle=-90, trim = 1.0cm 1.0cm 1.0cm 1.0cm, clip=true,]{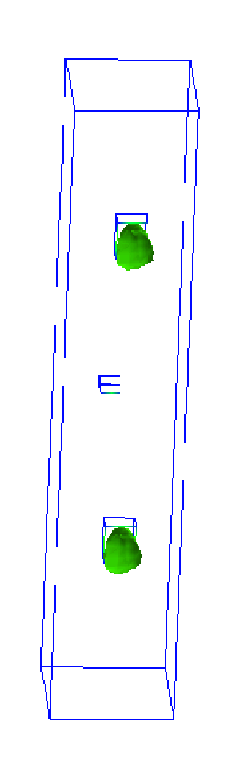}}  \\
$\max \limits_{\Omega_{FEM}} c(x) = 2.21, \sigma = 3 \% $ &  $\max \limits_{\Omega_{FEM}} c(x) = 3.13, \sigma = 10 \%$\\
 {\includegraphics[scale=0.23, angle=-90, trim = 1.0cm 1.0cm 1.0cm 1.0cm, clip=true,]{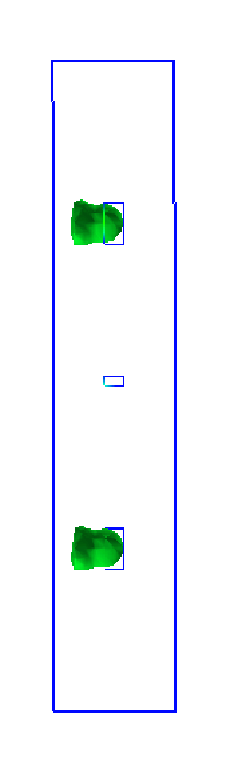}} &
 {\includegraphics[scale=0.23,angle=-90, trim = 1.0cm 1.0cm 1.0cm 1.0cm, clip=true,]{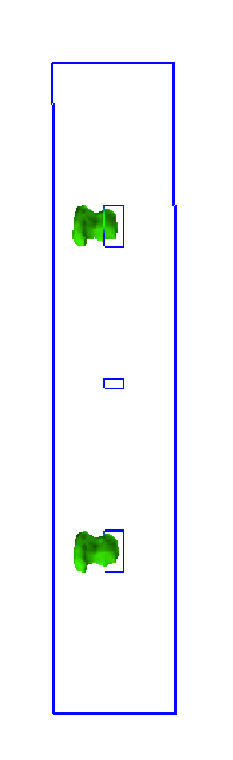}}  
\\
$x_1 x_3$ view & $x_1 x_3$ view\\
\end{tabular}
 \end{center}
 \caption{Test case i). Computed images of reconstructed $\tilde{c}$ for $\omega=40$ in
   (\ref{f}) and for a different noise level $\sigma$ in  (\ref{additive}). Bottom row present the respective  $x_{1}x_{3}$ views.}
 \label{fig:rec_caseii}
 \end{figure}

\subsection{Test case ii)}

\label{sec:caseii}

In this test, we have considered the same noise levels $\sigma$:
$\sigma = 3\%$ and $\sigma = 10 \%$, as in the test case i). The
behavior of the noisy backscattered data in this case is presented in
figure \ref{fig:data_caseiii}. Using figure \ref{fig:data_diff} we
observe that the difference in the amplitude of backscattered data
between the cases i) and ii) is very small and, as expected, is
located exactly at the place where the middle smallest inclusion of
figure \ref{fig:fig2} is moved,
This is because the difference in two
geometries of figure \ref{fig:fig2} is only in the location of the
small middle inclusion: in figure \ref{fig:fig2}-b) this inclusion is
moved more close to the backscattered boundary $\partial_1  \Omega$ than in the figure  \ref{fig:fig2}-a).

  The results of the reconstruction for both noise levels are presented in
  figure \ref{fig:rec_caseiii}. It follows from figure
\ref{fig:rec_caseiii} and table 1 that the imaged contrast in the function  $c(x)$ is
$2.07:1=\max_{\Omega_{FEM}} c_{7}:1 $, where $n:=\overline{N}=7$ is our
final iteration number in the conjugate gradient method when the noise level is 3 \%. Similar observation is valid from figure
\ref{fig:rec_caseiii} and table 1 for noise level 10 \% where imaged contrast in the function  $c(x)$ is
$3.06:1=\max_{\Omega_{FEM}} c_{9}:1 $, where $n:=\overline{N}=9$ is our
final iteration number. 
 Again, as in the case i) we observe
  that the location of all inclusions in $x_1 x_2$ direction is imaged
  very well. However, location in $x_3$ direction should still be
  improved.
 We also observe that the smallest inclusion of figure
 \ref{fig:fig2}-b) is reconstructed better than in the case i) since
 it is located closer to the observation boundary $\partial_1 S$.
Similar to \cite{BJ, BTKB}, in our future research, we plan to
 apply an adaptive finite element method which hopefully will improve
 the  shapes and sizes of all inclusions considered in tests i) and ii).

 \begin{figure}[tbp]
 \begin{center}
 \begin{tabular}{ccc}
 {\includegraphics[scale=0.32,  clip=]{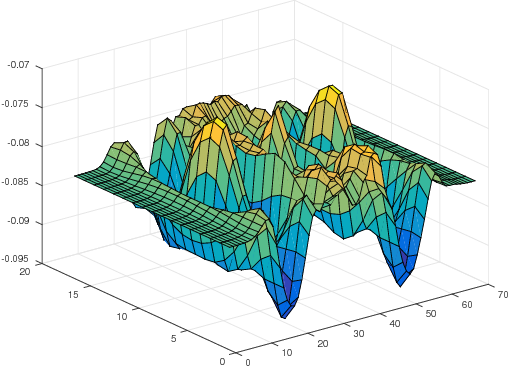}} &
 {\includegraphics[scale=0.32,  clip=]{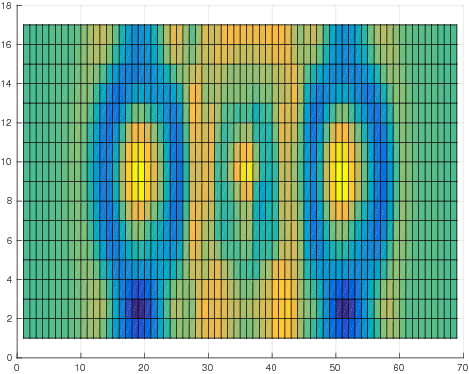}} \\
 a) prospect view & b) $x_1 x_2$ view \\
 \end{tabular}
 \end{center}
 \caption{Test case ii). Behavior of the noisy backscattered data at time $t=1.8$ with $\sigma = 3 \%$ in (\ref{additive}). }
 \label{fig:data_caseiii}
 \end{figure}

 \begin{figure}[tbp]
 \begin{center}
 \begin{tabular}{ccc}
 {\includegraphics[scale=0.32,  clip=]{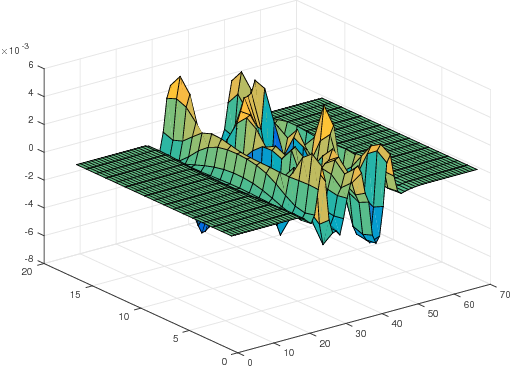}} &
 {\includegraphics[scale=0.32,  clip=]{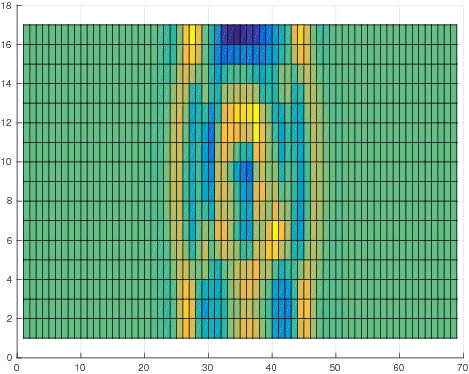}} \\
 a) prospect view & b) $x_1 x_2$ view \\
 \end{tabular}
 \end{center}
 \caption{The difference of the noisy backscattered data at time $t=1.8$ in case studies i) and ii) when noise level is $\sigma = 3 \%$ in (\ref{additive}). }
 \label{fig:data_diff}
 \end{figure}

 \begin{figure}
 \begin{center}
 \begin{tabular}{cc}
 {\includegraphics[scale=0.23, angle=-90, trim = 1.0cm 1.0cm 1.0cm 1.0cm, clip=true,]{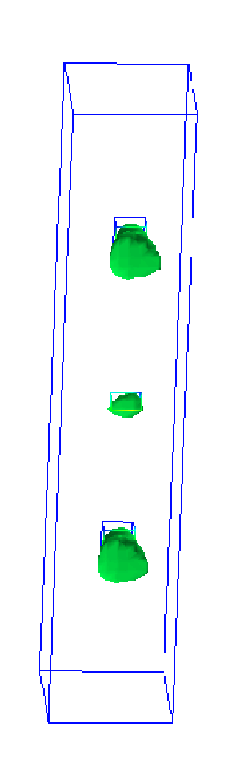}} &
 {\includegraphics[scale=0.23,angle=-90, trim = 1.0cm 1.0cm 1.0cm 1.0cm, clip=true,]{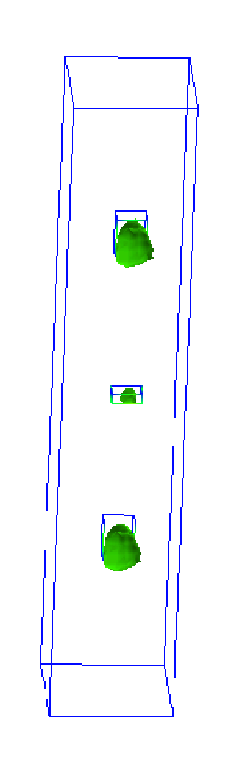}}  \\
$\max \limits_{\Omega_{FEM}} c(x) = 2.07, \sigma = 3 \% $ &  $\max \limits_{\Omega_{FEM}} c(x) = 3.06, \sigma = 10 \%$
\\
 {\includegraphics[scale=0.23, angle=-90, trim = 1.0cm 1.0cm 1.0cm 1.0cm, clip=true,]{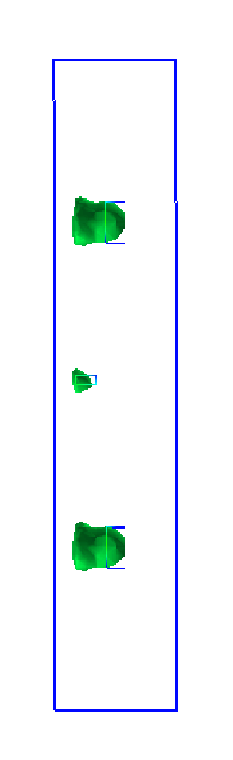}} &
 {\includegraphics[scale=0.23,angle=-90, trim = 1.0cm 1.0cm 1.0cm 1.0cm, clip=true,]{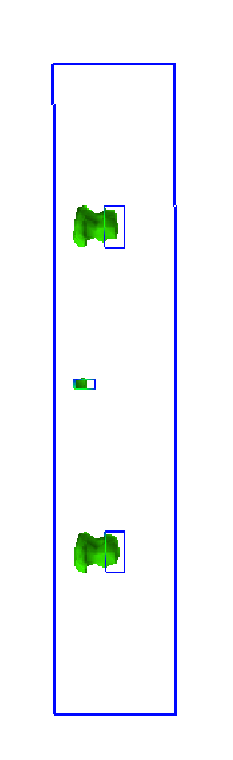}}  
\\
$x_1 x_3$ view &  $x_1 x_3$ view
\end{tabular}
 \end{center}
 \caption{Test case ii). Computed images of  reconstructed $\tilde{c}$ for $\omega=40$ in
   (\ref{f}) and for different noise level $\sigma$ in (\ref{additive}). Bottom row present the $x_{1}x_{3}$ views.}
 \label{fig:rec_caseiii}
 \end{figure}

 \begin{figure}
 \begin{center}
 \begin{tabular}{cc}
{\includegraphics[scale=0.19, trim = 0.0cm 3.0cm 0.0cm 3.0cm, clip=true,]{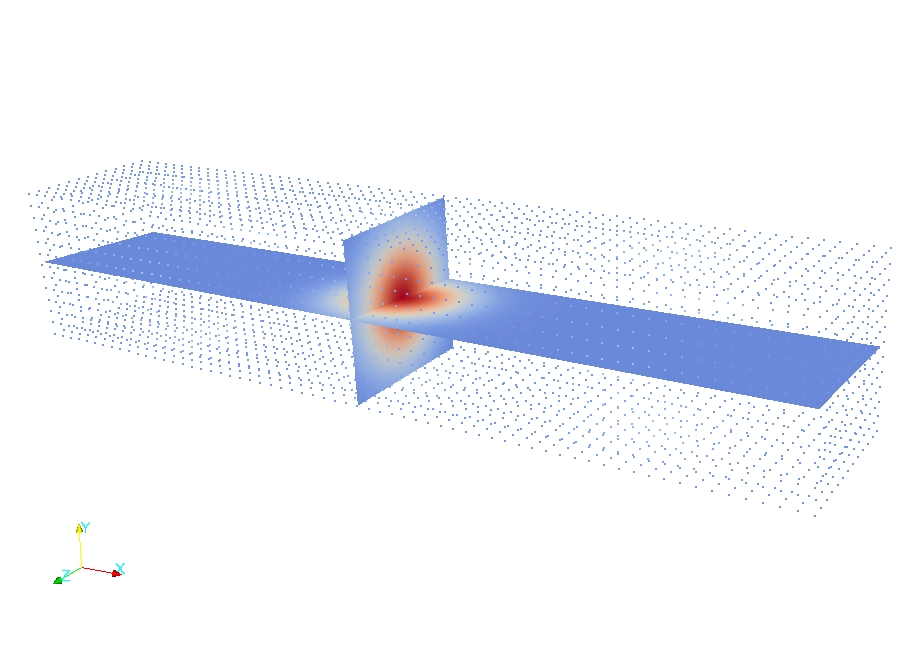}} &
  {\includegraphics[scale=0.19, trim = 0.0cm 3.0cm 0.0cm 3.0cm, clip=true,]{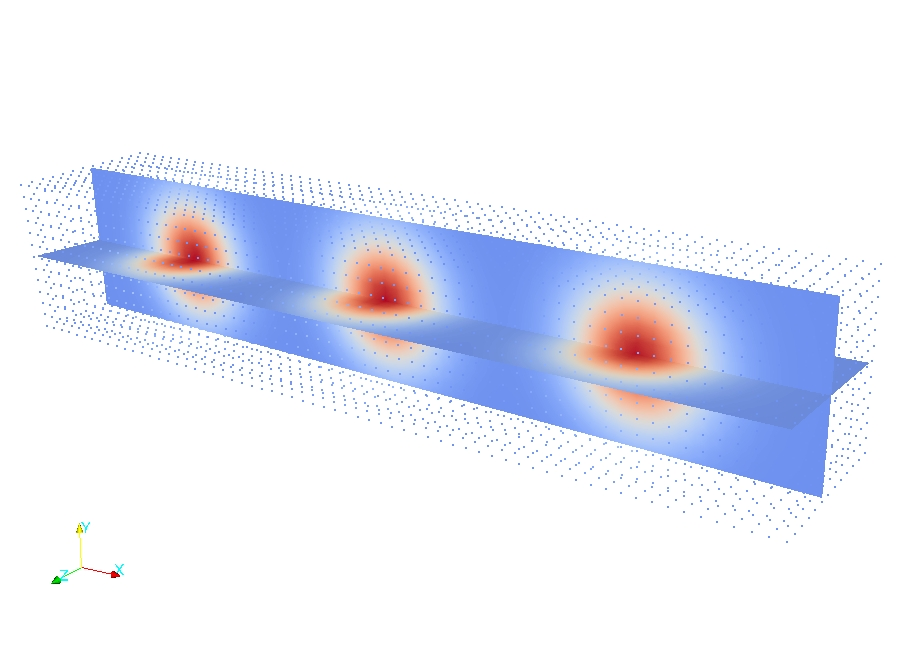}} 
 \\
a) Test iii): horizontal and vertical slices &  b)  Test iv): horizontal and vertical slices \\
{\includegraphics[scale=0.19, trim = 15.0cm 6.0cm 12.0cm 8.0cm, clip=true,]{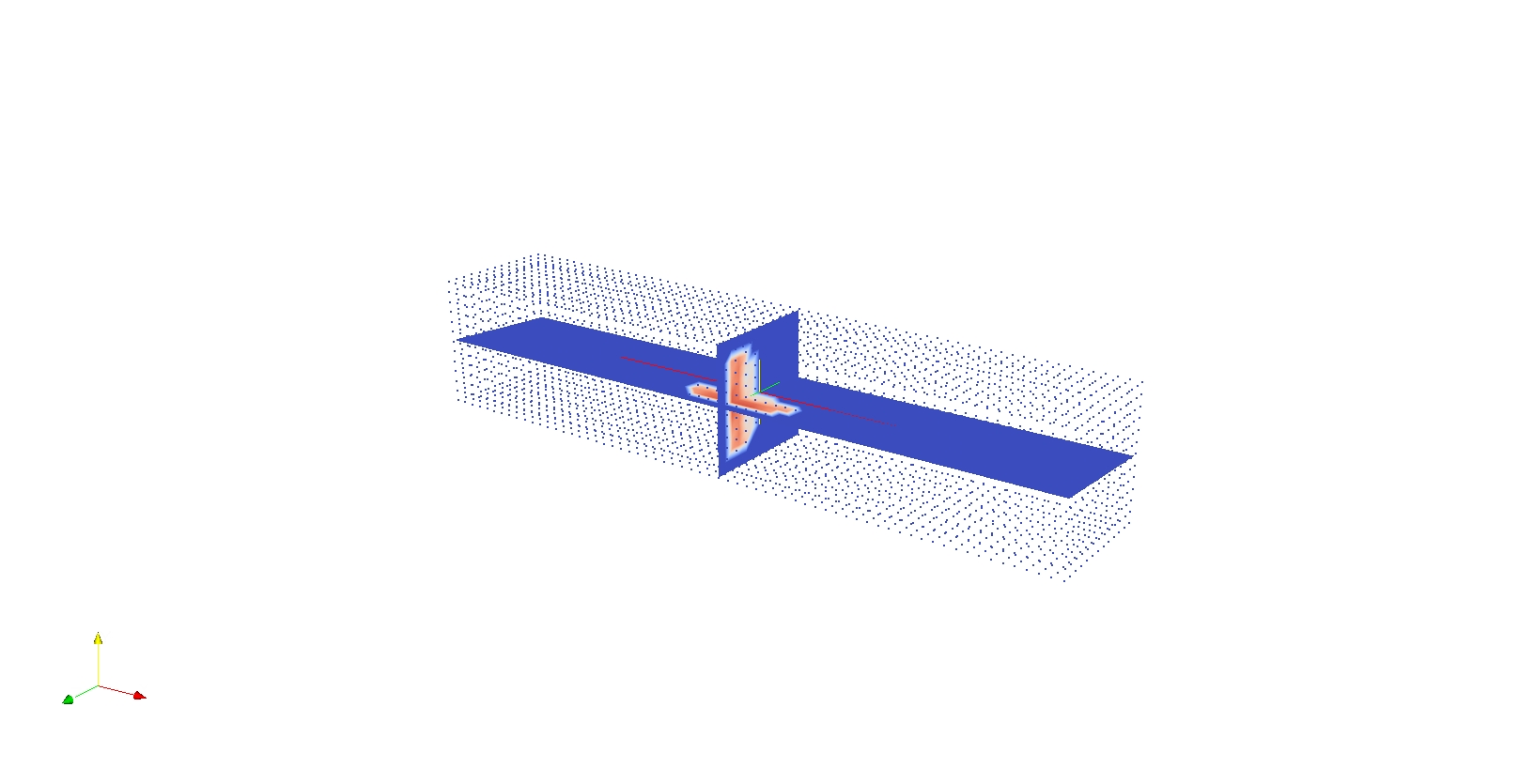}} &
  {\includegraphics[scale=0.19, trim = 15.0cm 6.0cm 12.0cm 8.0cm, clip=true,]{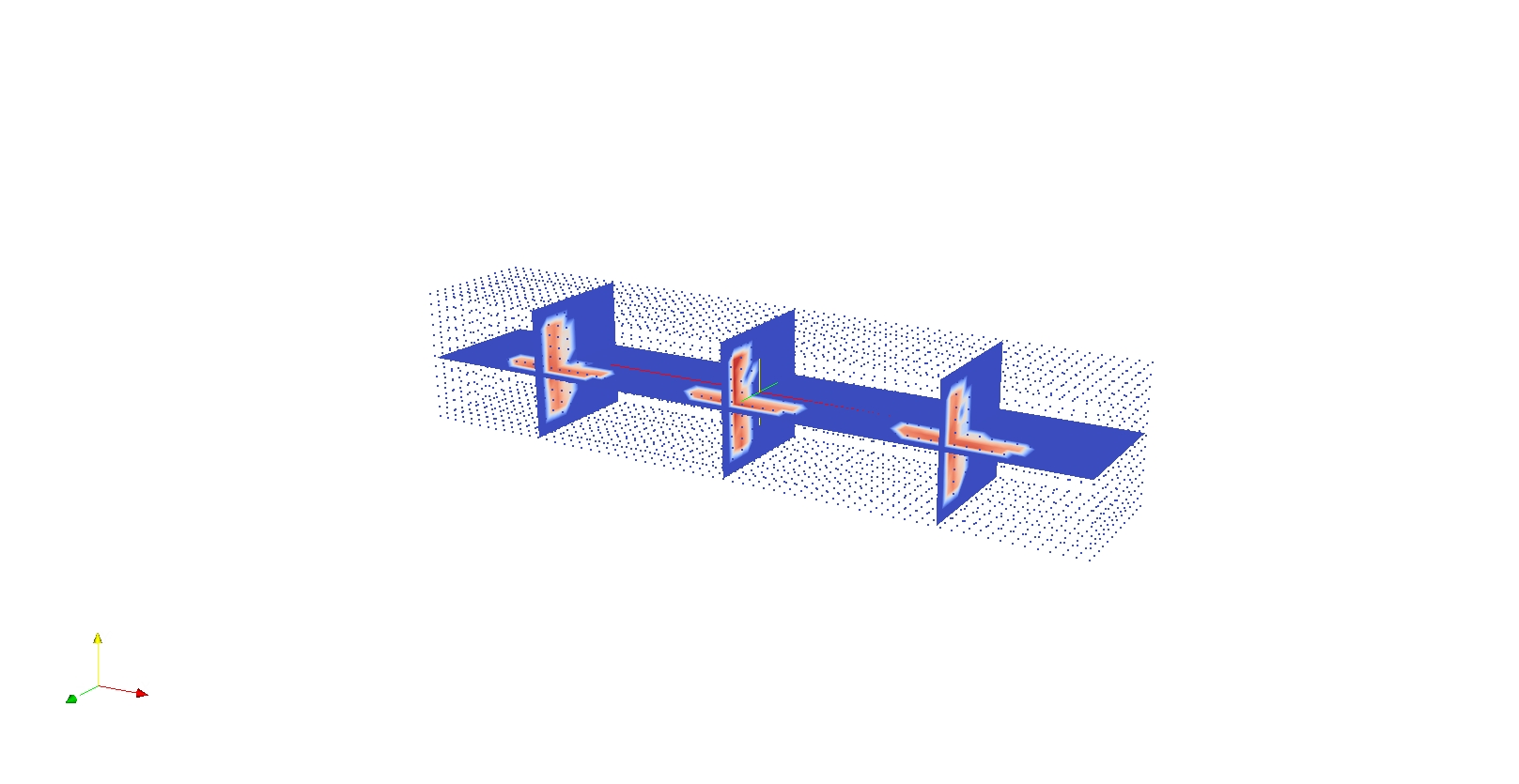}}  \\
c) Test iii): horizontal and vertical slices &  d)  Test iv): horizontal and vertical slices \\
{\includegraphics[scale=0.19, trim = 15.0cm 6.0cm 12.0cm 8.0cm, clip=true,]{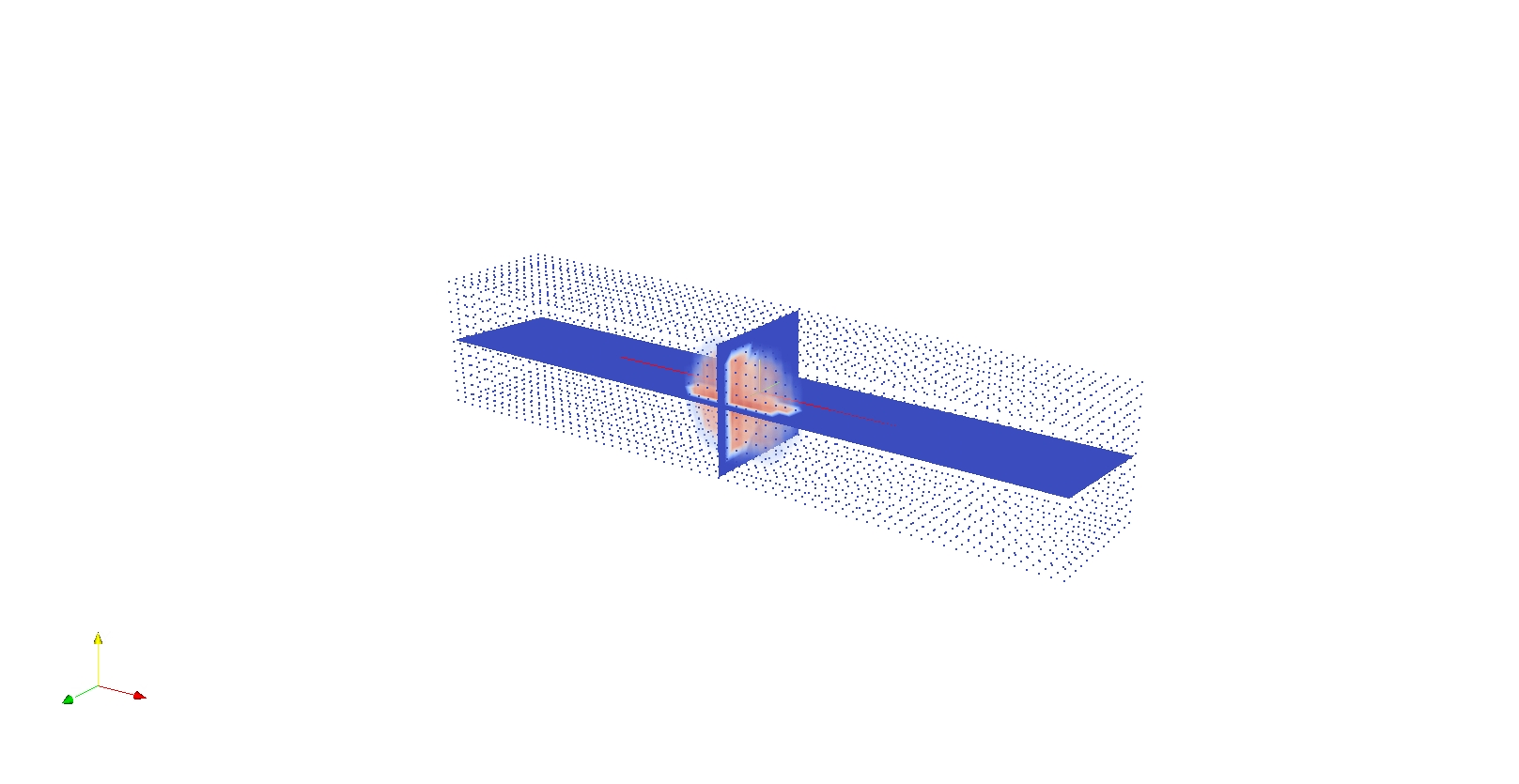}} &
  {\includegraphics[scale=0.19, trim = 15.0cm 6.0cm 12.0cm 8.0cm, clip=true,]{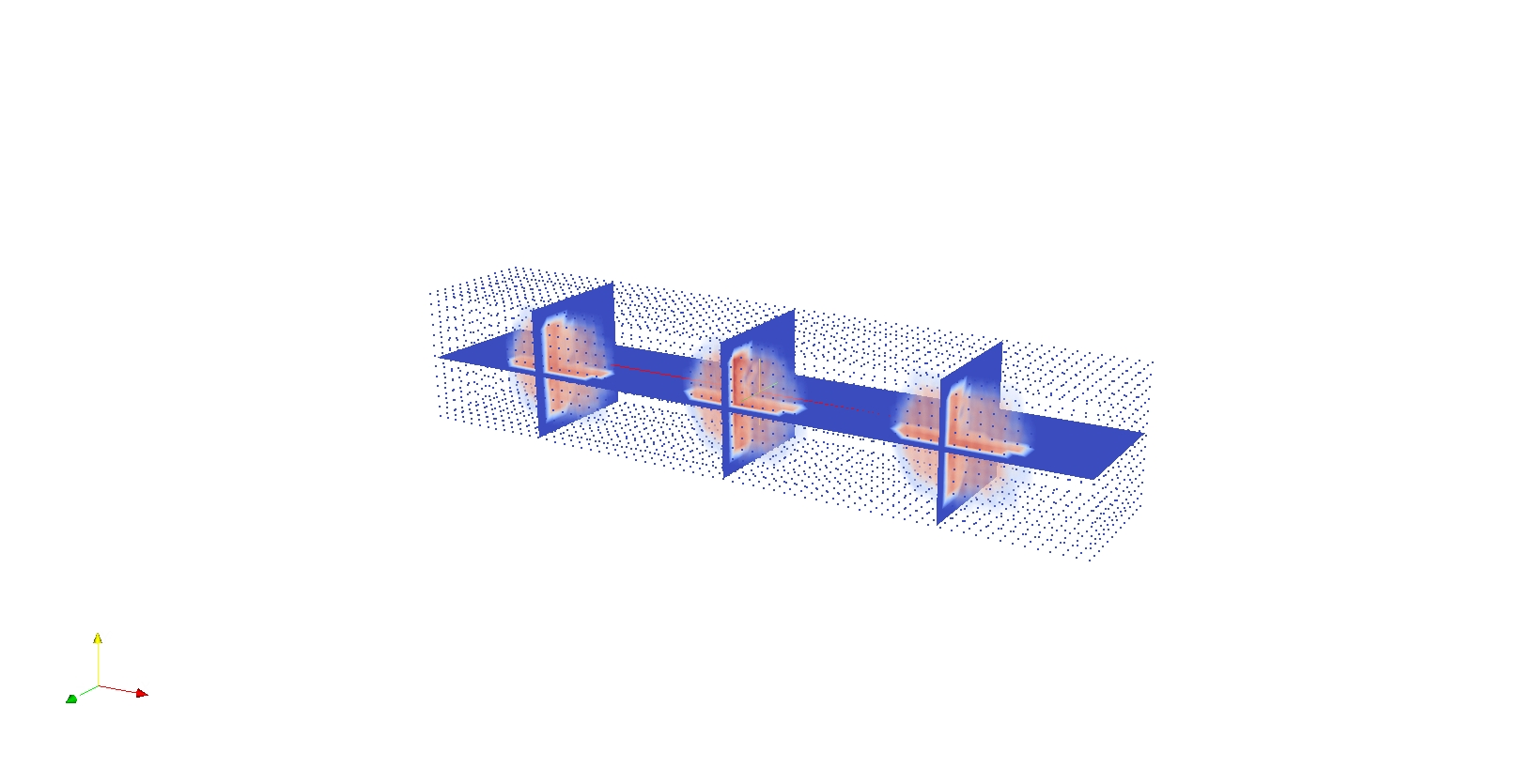}}  \\
e) Test iii): threshold of the solution &  f)  Test iv): threshold of the solution \\
\end{tabular}
 \end{center}
 \caption{ a), b). Slices of the exact  Gaussian functions given by (\ref{1gaussian})  and (\ref{3gaussians}), respectively.  c), d) Slices and e), f) thresholds of the reconstructions. Here, computations were done for the noise $\sigma=10\%$ and $\omega=40$.  }
 \label{fig:exact_gaus}
 \end{figure}

 \subsection{Test  case iii)}

\label{sec:caseiii}

 \begin{figure}
 \begin{center}
 \begin{tabular}{cc}
 {\includegraphics[scale=0.20, trim = 1.0cm 1.0cm 1.0cm 1.0cm,
     clip=true,]{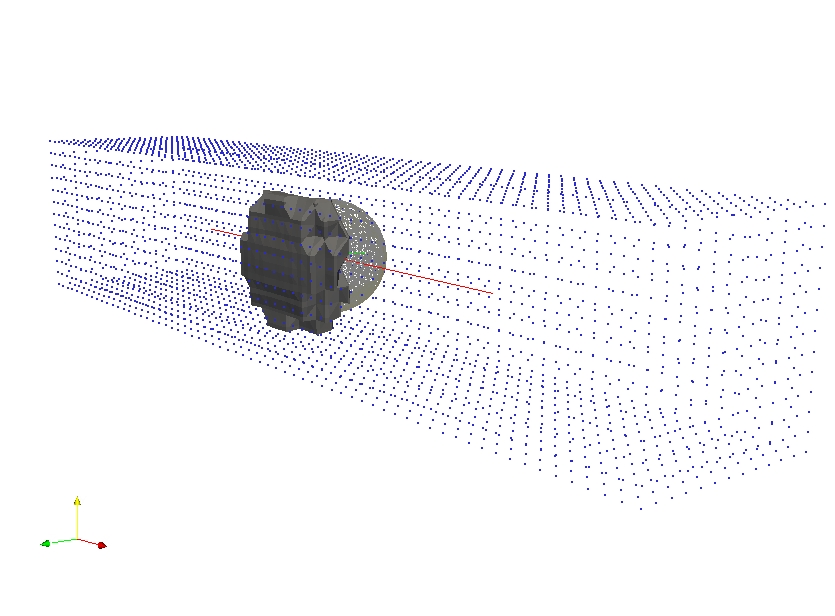}} &
 {\includegraphics[scale=0.20, trim = 1.0cm 1.0cm 1.0cm 1.0cm,
     clip=true,]{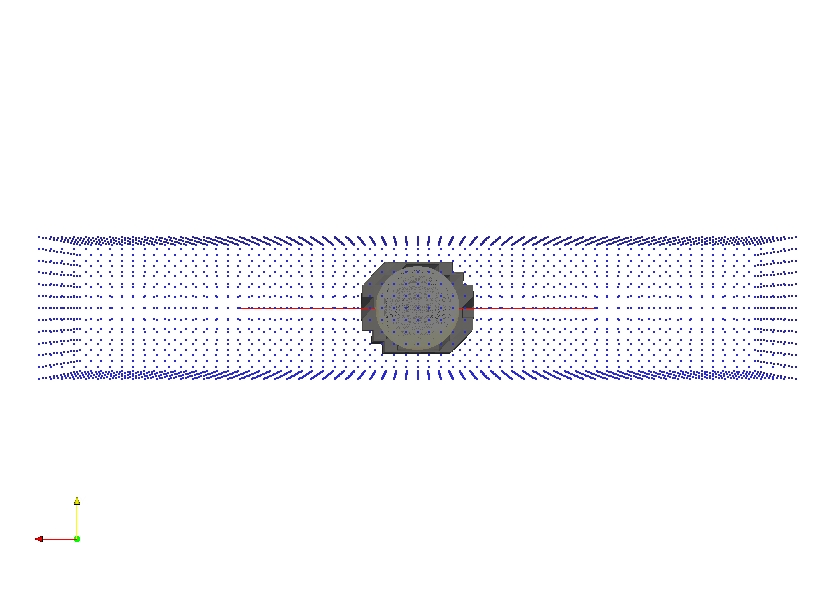}} \\ prospect view & $x_1 x_2$
 view \\ {\includegraphics[scale=0.20, trim = 1.0cm 1.0cm 1.0cm 1.0cm,
     clip=true,]{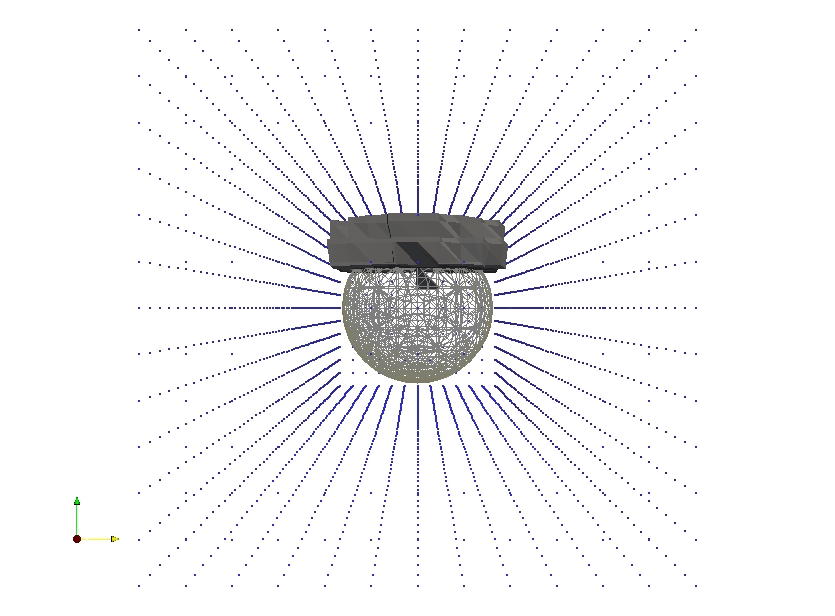}} &
 {\includegraphics[scale=0.20, trim = 1.0cm 1.0cm 1.0cm 1.0cm,
     clip=true,]{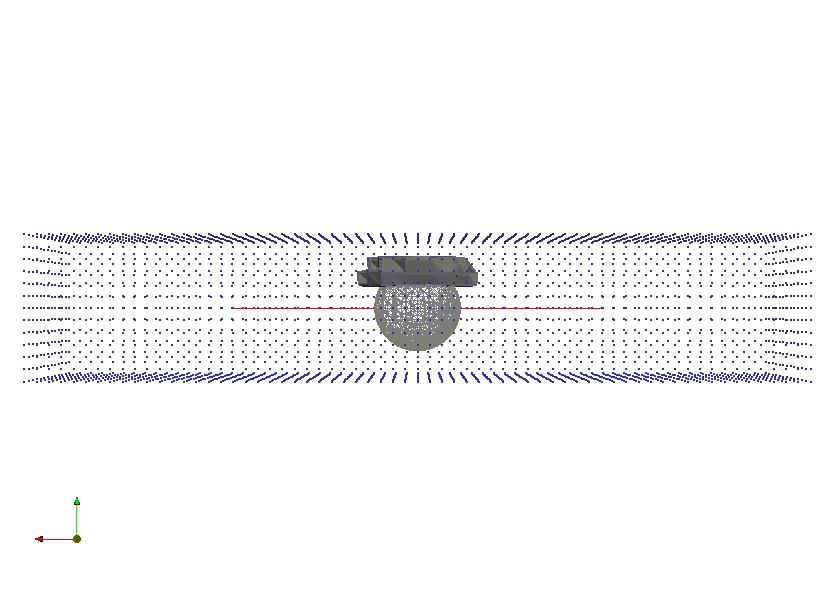}} \\ $x_2 x_3$ view & $x_3 x_1$ view
 \\
\end{tabular}
 \end{center}
 \caption{Test case iii). We present reconstruction of $\tilde{c}$  with $\max_{\Omega_{FEM}} c(x) = 5.91 $ for
   $\omega=40$ in (\ref{f}) with noise level $\sigma=3\%$. The  spherical wireframe of the isosurface
   with exact value of the function (\ref{1gaussian}), which
   corresponds to the value of the reconstructed $\tilde{c}$, is
   outlined by a thin line. }
 \label{fig:1gausnoise3}
 \end{figure}

 \begin{figure}
 \begin{center}
 \begin{tabular}{ccc}
{\includegraphics[scale=0.18,  trim = 1.0cm 1.0cm 1.0cm 1.0cm, clip=true,]{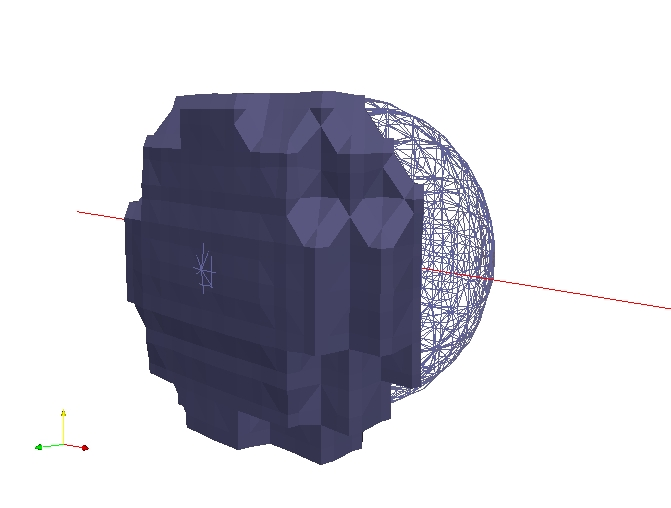}} &
 {\includegraphics[scale=0.18, trim = 1.0cm 1.0cm 1.0cm 1.0cm, clip=true,]{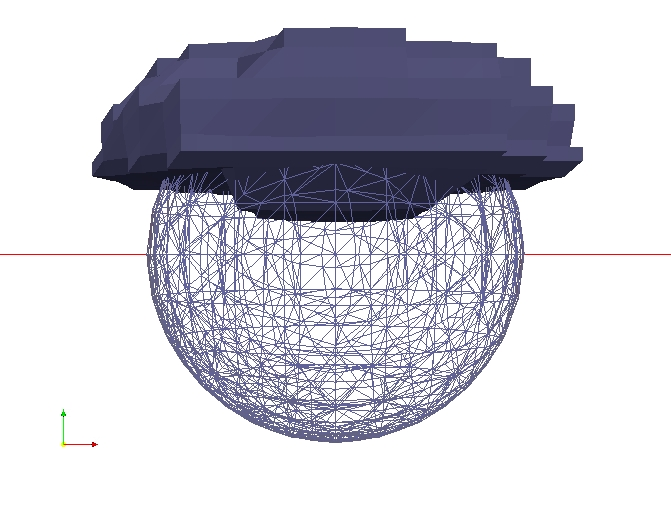}}  &
 {\includegraphics[scale=0.18, trim = 1.0cm 1.0cm 1.0cm 1.0cm, clip=true,]{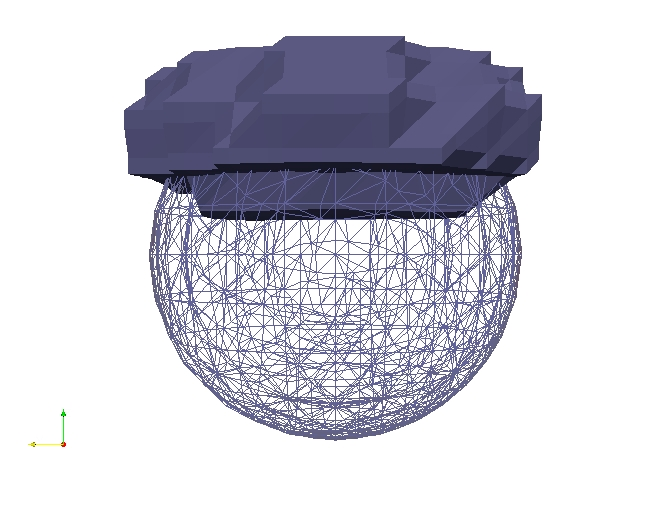}} 
\\

prospect view &  $x_2 x_3 $ view & $x_1 x_3$ view
\end{tabular}
 \end{center}
 \caption{Test case iii). Computed images of reconstructed
   $\max_{\Omega_{FEM}} c(x) = 4.84 $ for $\omega=40$ in (\ref{f}) and
   noise level $\sigma=10\%$.  The spherical
   wireframe of the isosurface with exact value of the function
   (\ref{1gaussian}), corresponding to the value of the
   reconstructed $\tilde{c}$, is outline by a thin line.}
 \label{fig:1gausnoise10}
 \end{figure}

In this numerical test, we reconstruct the conductivity function $c(x)$ which is defined as follows
\begin{equation}\label{1gaussian}
c(x) = 1.0 + 5.0 \cdot \exp^{-({x_1}^2/0.2 + {x_2}^2/0.2 + {x_3}^2/0.2)},
\end{equation}
see Figure \ref{fig:exact_gaus}-a). 
 In this test, we have used noisy
boundary data $u_{\sigma}$ with $\sigma=3\%$ and  $\sigma=10\%$ in (\ref{additive}).
Note that a priori  we have not assumed
 that we know the structure of this function, further we have
assumed that we know the lower bound $c(x) \geq 1$ and that the
reconstructed values of the conductivity belongs to the set of
admissible parameters which is now defined as 
 \begin{equation}\label{admpargaus}
 \begin{split}
  M_{c} \in \{c\in C(\overline{\Omega })|1\leq c(x)\leq 10\}.\\
 \end{split}
 \end{equation}

Figures \ref{fig:exact_gaus}-c), e)  and \ref{fig:1gausnoise10} display
results of the reconstruction of function given by (\ref{1gaussian})
with $\sigma=10\%$ in (\ref{additive}). Quite similar results are
obtained for $\sigma=3\%$ in (\ref{additive}), see figure
\ref{fig:1gausnoise3}. We observe that the location of the maximal
value of the function (\ref{additive}) is imaged very well. It follows
from figure \ref{fig:1gausnoise3} and table 1 that the imaged contrast
in this function is $5.91:1=\max_{\Omega_{FEM}} c_{12}:1 $, where
$n:=\overline{N}=12$ is our final iteration number in the conjugate
gradient method. Similar observation is valid from figure
\ref{fig:1gausnoise10} and table 1 where the imaged contrast is
$4.84:1=\max_{\Omega_{FEM}} c_{16}:1 $, $n:=\overline{N}=16$. However,
from these figures we also observe that because of the data
post-processing procedure (\ref{postproc}) the values of the
background $1.0 + 5.0 \cdot \exp^{-(x^2/0.2 + y^2/0.2 + z^2/0.2)}$ in
(\ref{1gaussian}) are not reconstructed but are smoothed out. Thus, we
are able to reconstruct only maximal values of the function
(\ref{1gaussian}).  Comparison of figures \ref{fig:exact_gaus}-c), e),
\ref{fig:1gausnoise3}, \ref{fig:1gausnoise10} with
figure~\ref{fig:exact_gaus}-a) reveals that it is desirable to improve
shape of the function (\ref{1gaussian}) in $x_3$ direction.  Again,
similar to \cite{BJ, BTKB} we hope that an adaptive finite
element method can refine the obtained images of figure
\ref{fig:1gausnoise10} in order to get better shapes and sizes of the
function (\ref{1gaussian}) in all directions.

 \begin{figure}
 \begin{center}
 \begin{tabular}{cc}
 {\includegraphics[scale=0.20,  trim = 1.0cm 1.0cm 1.0cm 1.0cm, clip=true,]{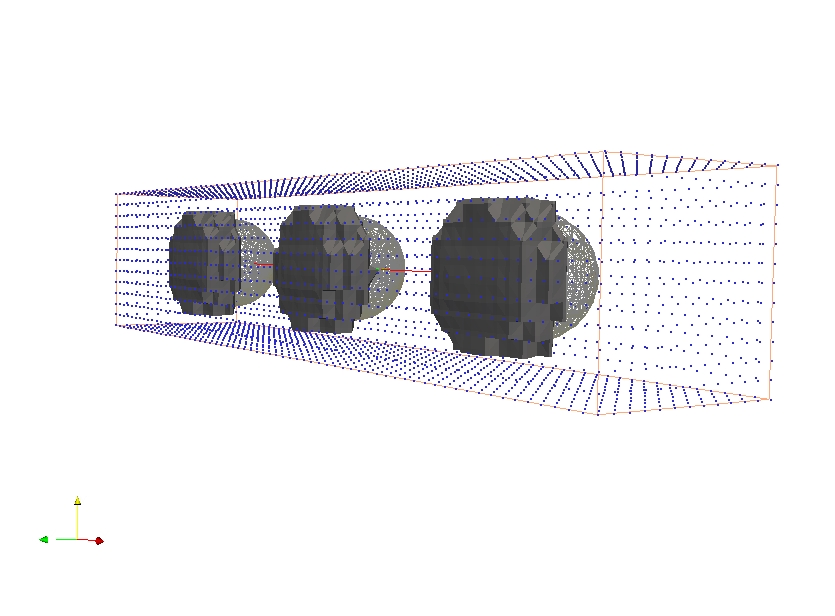}} &
 {\includegraphics[scale=0.20, trim = 1.0cm 1.0cm 1.0cm 1.0cm, clip=true,]{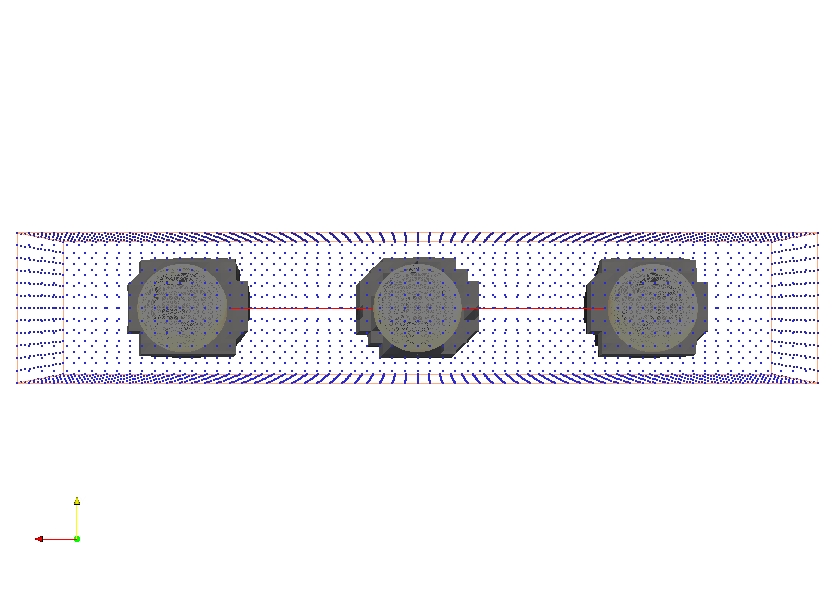}}  
\\
prospect view &  $x_1 x_2$  view \\
 {\includegraphics[scale=0.20,  trim = 1.0cm 1.0cm 1.0cm 1.0cm, clip=true,]{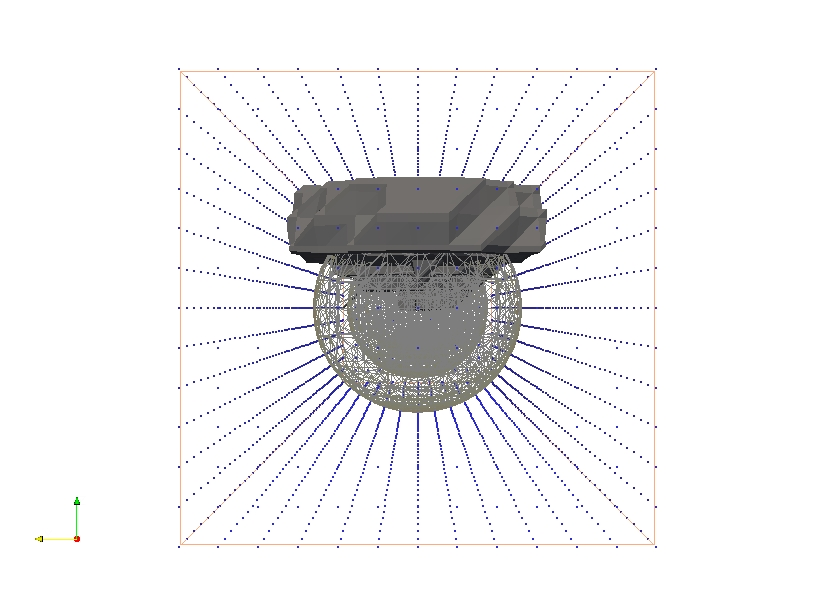}} &
 {\includegraphics[scale=0.20, trim = 1.0cm 1.0cm 1.0cm 1.0cm, clip=true,]{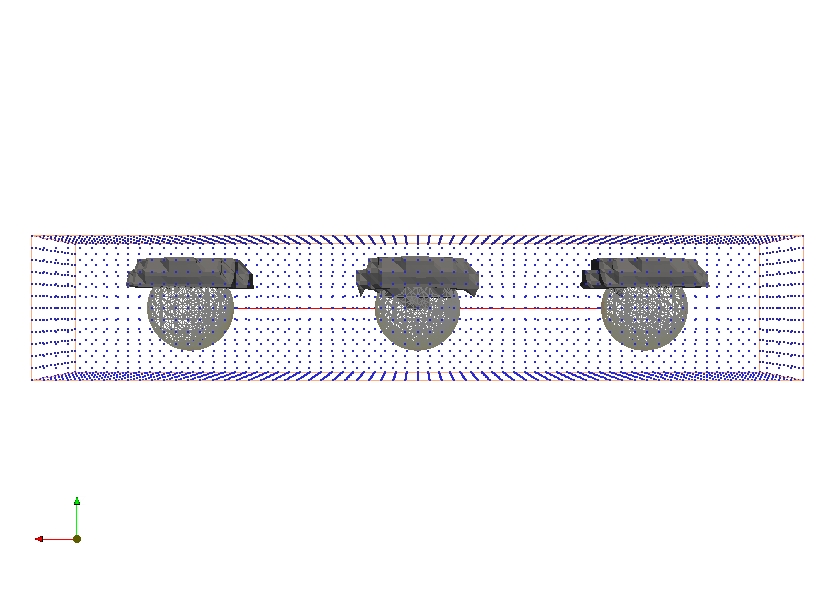}}  
\\
$x_2 x_3$ view  &  $x_3 x_1$  view \\
\end{tabular}
 \end{center}
 \caption{Test case iv).  We present reconstruction of $\tilde{c}$ when
   $\max_{\Omega_{FEM}} c(x) = 5.09 $ for $\omega=40$ in (\ref{f}) and
   noise level $\sigma=3\%$. 
 The spherical
   wireframe of the isosurface with exact value of the function
   (\ref{3gaussians}), which corresponds to the value of the
   reconstructed $\tilde{c}$, is outlined by a thin line.}
 \label{fig:3gausnoise3}
 \end{figure}

 \begin{figure}
 \begin{center}
 \begin{tabular}{cc}
 {\includegraphics[scale=0.20,  trim = 1.0cm 1.0cm 1.0cm 1.0cm, clip=true,]{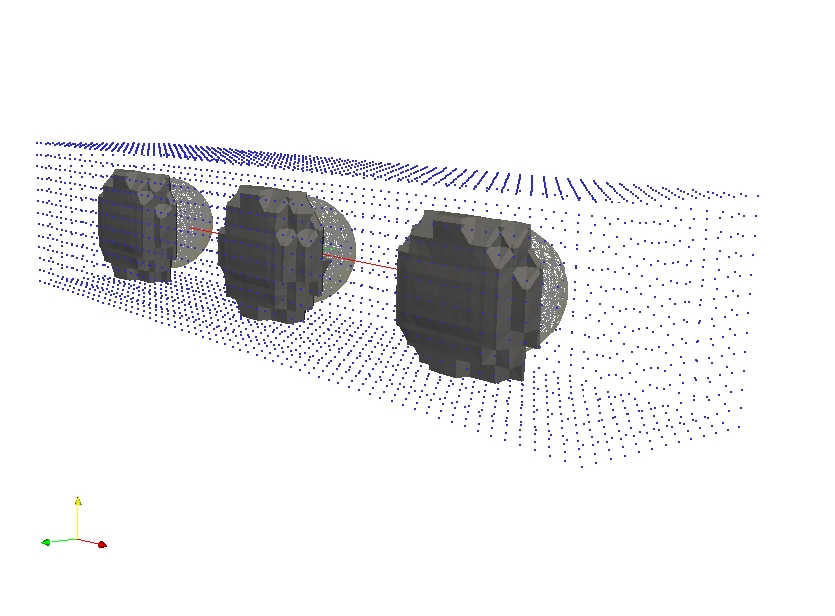}} &
 {\includegraphics[scale=0.20, trim = 1.0cm 1.0cm 1.0cm 1.0cm, clip=true,]{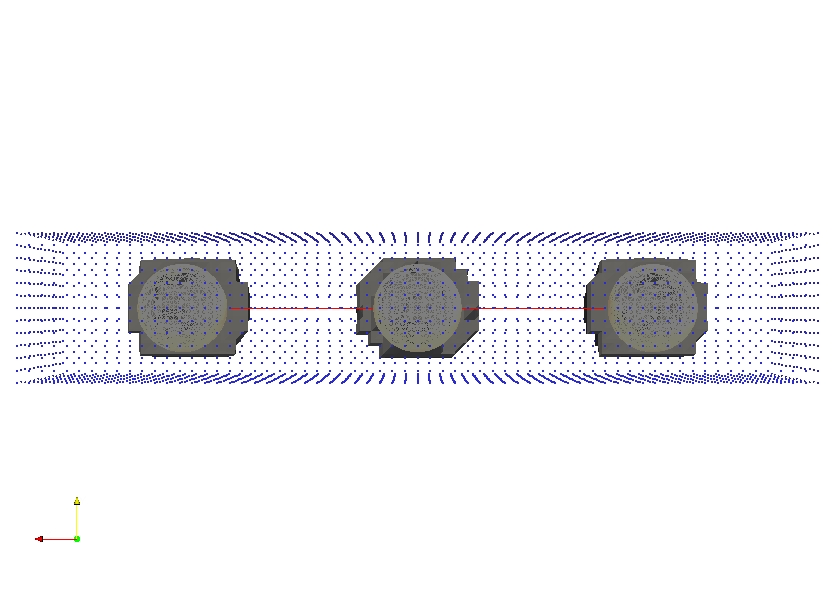}}  
\\
prospect view &  $x_1 x_2$  view \\
 {\includegraphics[scale=0.20,  trim = 1.0cm 1.0cm 1.0cm 1.0cm, clip=true,]{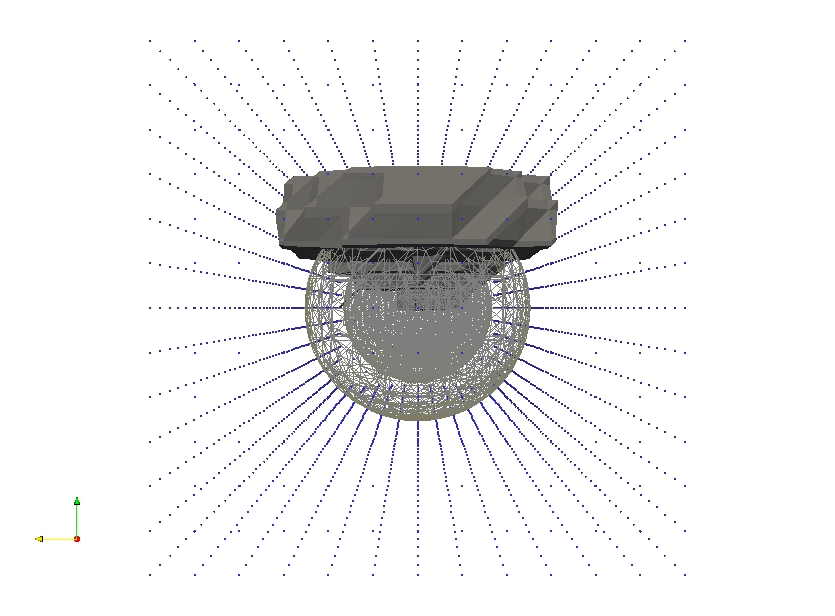}} &
 {\includegraphics[scale=0.20, trim = 1.0cm 1.0cm 1.0cm 1.0cm, clip=true,]{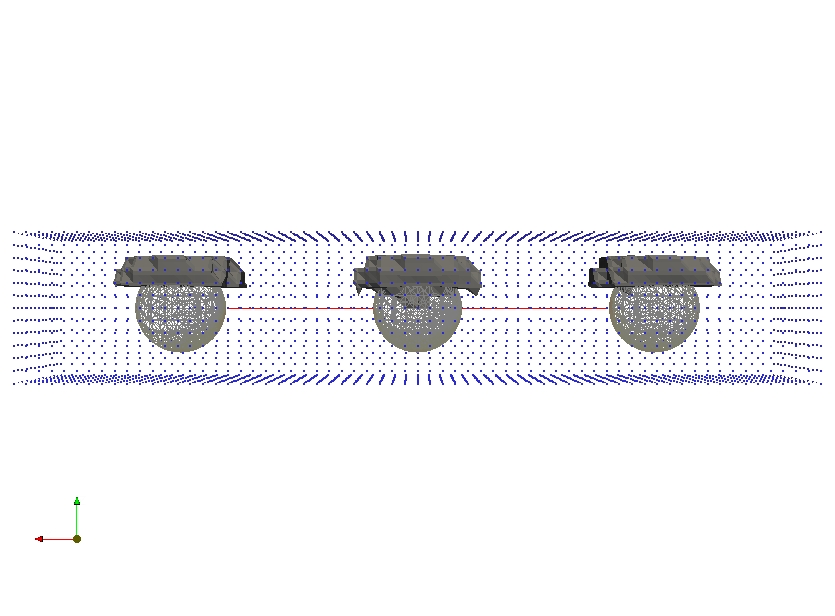}}  
\\
$x_2 x_3$ view  &  $x_3 x_1$  view \\
\end{tabular}
 \end{center}
 \caption{Test case iv).  We present reconstruction of $\tilde{c}$ when
   $\max_{\Omega_{FEM}} c(x) = 5.87$ for $\omega=40$ in (\ref{f}) and    noise level $\sigma=10\%$. The spherical   wireframe of the isosurface with exact value of the function   (\ref{3gaussians}), corresponding to the value of the   reconstructed $\tilde{c}$, is outlined by a thin line.}
 \label{fig:3gausnoise10}
 \end{figure}

  \subsection{Test case iv)}

\label{sec:caseiv}

In our last numerical test we reconstruct the conductivity function
$c(x)$ given by  three sharp Gaussians  such that
\begin{equation}\label{3gaussians}
\begin{split}
c(x) &= 1.0 + 5.0 \cdot \exp^{-((x_1 + 2)^2/0.2 + {x_2}^2/0.2 + {x_3}^2/0.2 )} \\
&+ 5.0 \cdot \exp^{-({x_1}^2/0.2 + {x_2}^2/0.2 + {x_3}^2/0.2 )} + 5.0 \cdot \exp^{-(({x_1}-2)^2/0.2 + {x_2}^2/0.2 + {x_3}^2/0.2 )},
\end{split}
\end{equation}
see Figure \ref{fig:exact_gaus}-b). 
 In this test we again used the noisy
boundary data $u_{\sigma}$ with $\sigma=3\%$ and  $\sigma=10\%$ in (\ref{additive}).
 We assume that the
reconstructed values of the conductivity belongs to the set of
admissible parameters  (\ref{admpargaus}).

Figures \ref{fig:exact_gaus}-d),f), \ref{fig:3gausnoise3} and
\ref{fig:3gausnoise10} show results of the reconstruction of function
given by (\ref{3gaussians}) for $\sigma=3\%$ and $\sigma=10\%$ in
(\ref{additive}), respectively.  We observe that the location of the
maximal value of the function (\ref{3gaussians}) is imaged very
well. It follows from figure \ref{fig:3gausnoise3} and table 1 that
when the noise level is $\sigma=3\%$ then the imaged contrast in this
function is $5.09:1=\max_{\Omega_{FEM}} c_{15}:1 $, where
$n:=\overline{N}=15$ is our final iteration number in the conjugate
gradient method. When the noise level is $\sigma=10\%$ then the imaged
contrast is $5.87:1=\max_{\Omega_{FEM}} c_{18}:1, n:=\overline{N}=18$.

 However, as
in the case iii), the values of the background in (\ref{3gaussians}) are
not reconstructed but are smoothed out and we are able to
reconstruct only maximal values of the three Gaussians in the function (\ref{3gaussians}).
Comparing  figures \ref{fig:exact_gaus}-d),f), \ref{fig:3gausnoise3}, \ref{fig:3gausnoise10}  with
figure~\ref{fig:exact_gaus}-b) we see that it is desirable to improve
shapes of the function (\ref{3gaussians}) in $x_3$ direction.

\section{Discussion and Conclusion}

In this work, we have presented a computational study of the
reconstruction of the conductivity function $c(x)$ in a hyperbolic
problem (\ref{model1}) using Lagrangian approach and a hybrid finite
element/difference method of \cite{BAbsorb}.  As theoretical result, we
have presented estimate of the norms between computed and regularized
solutions of the Tikhonov functional via the $L_2$ norm of the
Fr\'{e}chet derivative of this functional or via the corresponding
Lagrangian. 

In our numerical tests, we have obtained stable  reconstruction
of the conductivity function $c(x)$ in $x_1 x_2$-directions for
frequency $\omega=40$ in the initialization of a plane wave (\ref{f})
and for noise levels $\sigma = 3 \%, 10 \%$ in (\ref{additive}).  However,
size and shape on $x_3$ direction  should 
still be improved in all test cases.  Similar to \cite{BJ, BTKB}
we plan to apply an adaptive finite element method in order to get
better shapes and sizes of the conductivity function $c(x)$ in $x_3$
direction.

Using results of table 1 we can conclude that the computational errors
in the achieved maximal contrast are less in the case of
reconstruction of  smooth functions than in the reconstruction of
small inclusions. This can be explained by involving of discontinuities
in the reconstruction of small inclusions, as well as by having
special geometry in these small inclusions: all of them have different
sizes and locations inside $\Omega_{FEM}$, and thus, achieving  the
exact contrast becomes more difficult task in this case.
The  important observation is that when the
scatters are of different size, especially when the smallest scatterer
is located between larger ones, as in case studies i) and ii),
we note that the smaller scatterer is better reconstructed when it is
located near the observation boundary of the computational domain
$\Omega_{FEM}$.

\section*{Acknowledgement}

The research of L. B.  is partially supported by the sabbatical
program at the Faculty of Science, University of Gothenburg, Sweden,
and the research of K. N.  was supported by the Swedish Foundation for
Strategic Research.  The computations were performed on resources at Chalmers
Centre for Computational Science and Engineering (C3SE) provided by
the Swedish National Infrastructure for Computing (SNIC).

\end{document}